\newtheorem{proposition}{Proposition}[section]
\newtheorem{theorem}[proposition]{Theorem}
\newtheorem{lemma}[proposition]{Lemma}
\theoremstyle{definition}
\newtheorem{definition}[proposition]{Definition}
\numberwithin{equation}{section}
 \def\R{\mathbb{R}}
  \def\Z{\mathbb{Z}}
  \def\C{\mathbb{C}}
  \def\ve{\varepsilon}
\newcommand{\ee}{\mathrm{e}}
\newcommand{\p}{\partial}
\newcommand{\HH}{\mathcal{H}}
\newcommand{\KK}{\mathcal{K}}
\newcommand{\JJ}{\mathcal{J}}
\renewcommand{\d}{\mathrm{d}}
\newcommand{\re}{\mathrm{Re}}
\newcommand{\im}{\mathrm{Im}}
\newcommand{\res}{\mathrm{res}}
\newcommand{\eqnb}{\begin{equation}}
\newcommand{\eqne}{\end{equation}}
\renewcommand*{\thefootnote}{(\arabic{footnote})}
\newcommand\blfootnote[1]{%
  \begingroup
  \renewcommand\thefootnote{}\footnote{#1}%
  \addtocounter{footnote}{-1}%
  \endgroup
}
\title[Linear instability of logarithmic spiral vortex sheets]{Linear instability of symmetric logarithmic spiral vortex sheets}
\author{Tomasz Cie\'{s}lak, Piotr Kokocki and Wojciech S. O\.za\'nski}
\begin{document}

\begin{abstract}
We consider Alexander spirals with $M\geq 3$ branches, that is symmetric logarithmic spiral vortex sheets. We show that such vortex sheets are linearly unstable in the $L^\infty$ (Kelvin-Helmholtz) sense, as solutions to the Birkhoff-Rott equation. To this end we consider Fourier modes in a logarithmic variable to identify unstable solutions with polynomial growth in time.
\end{abstract}

\maketitle
\blfootnote{T.~Cie\'{s}lak: Institute of Mathematics, Polish Academy of Sciences, \'Sniadeckich 8, 00-656 Warszawa, Poland, email: cieslak@impan.pl

P.~Kokocki: Faculty of Mathematics and Computer Science, Nicolaus Copernicus University, Chopina 12/18, 87-100 Toru\'n, Poland, email: pkokocki@mat.umk.pl

W. S. O\.za\'nski: Department of Mathematics, Florida State University, Tallahassee, FL 32306 e-mail: wozanski@fsu.edu }

\section{Introduction}

We consider $M\geq 3$ concentric logarithmic spiral vortex sheets, parametrized by
\eqnb\label{log_spiral}
\begin{cases}
Z_{m}(\theta , t) = t^\mu \ee^{a (\theta - \theta_{m})} \ee^{i\theta },\\
\Gamma_{m} (\theta , t) = g_{m} t^{2\mu -1 }  \ee^{2a (\theta - \theta_{m} )} ,\\
r_{m}(\theta , t)  = | Z_{m} (\theta , t )| = t^\mu \ee^{a (\theta - \theta_{m} )},\quad \theta \in \R, \ t>0,
\end{cases}
\eqne
where $a>0$, $\mu\in\R$, $g_{m}\in\R\setminus\{0\}$ and $0=\theta_{0}<\theta_{1}<\theta_{2}<\ldots<\theta_{M-1}<2\pi$. Here $\Gamma_m (\theta , t)$ denotes the total circulation at radius $r_m(\theta , t)$, and $Z_m(\theta , t)$ denotes the parametrization of the spiral at time $t$ with respect to the polar parameter $\theta\in\R$. The spirals were first introduced by Prandtl \cite{prandtl92} as a model of air vortices appearing on the tips of wings and were used to study the effect of the vortices on the lifting force of the wings. 

The authors' previous work \cite{cko,cko1} provide a theory of logarithmic spiral vortex sheets as solutions to the 2D Euler equations, as well as constructs a generic family of nonsymmetric spirals with branches of relative angles $\theta_{m}=\theta_{m}(a)$ arbitrarily close to $m\pi/M$ (i.e. halves of the symmetric angles) as $a\to \infty$, for all $0\le m\le M-1$.

In this paper, we are concerned with the vortex sheets \eqref{log_spiral}, where
\eqnb\label{alex}
g_m= g \quad\text{and}\quad \theta_m = 2\pi m /M \quad \text{for} \ \ m=0,\ldots , M-1,
\eqne
for some $g\in \R \setminus \{ 0 \}$, where $M\geq 3$, which are often referred to as \emph{Alexander spirals} \cite{alexander}.
Such spirals satisfy the $2$D Euler equation on $(0,\infty)\times \R^2$ as well as the Birkhoff-Rott \cite{birkhoff,rott} equations
\begin{equation}\label{B-R}
\partial_t Z_m(t,\Gamma_m)^*=\frac{1}{2\pi i}\mathrm{p.v.}\int\sum_{k=0}^M\frac{\d\Gamma_k}{Z_m(t,\Gamma_m)-Z_k(t,\Gamma_k)}, \quad 0\le m\le M-1,
\end{equation}
if $a^2+1-2\mu + 2a\mu i =-2a^2 g \coth (\pi A/M)$ (see (1.18) in \cite{cko} and \cite{EL}, respectively). Regarding the notation in \eqref{B-R}, we denote by $z^*$ the complex conjugate of $z$ and consider the spirals parametrized using the cumulative vorticity $\Gamma$ rather than the angle of rotation $\theta$. For more information concerning the Birkhoff-Rott equation we refer the reader to \cite[Chapter~9]{majda_bertozzi}.
The last equality above can be satisfied by, for example, first choosing any sufficiently large $a>0$ and then determining uniquely $\mu\in \R$, and $g\in \R\setminus \{ 0 \}$, see the comments below \cite[Corollary~1.4]{cko} for details. Therefore, in this work we suppose that such $a$, $\mu$, $g$ have already been determined.

We consider the Alexander spiral described by $Z_m (t, \Gamma_{m})$ (as in \eqref{log_spiral}--\eqref{alex}), and we focus our attention on perturbations of the form
\eqnb\label{perturbation}
Z_m (t,\Gamma_{m}) + \varepsilon \zeta_m (t,\Gamma_{m}),\quad 0\leq m \leq M-1.
\eqne
It can  be shown that the linearization of the Birkhoff-Rott equation \eqref{B-R} around $Z_m$ takes the form
\eqnb\label{intro_linearization}
t\partial_{t}\zeta_{m}(t,\theta)^{*} - \frac{2\mu -1}{2a}\partial_{\theta}\zeta_{m}(t,\theta)^{*} =-\mathcal{H}_{m}\zeta\,(t,\theta),\quad t>0,
\eqne
for $m=0,\ldots,M-1$, where we denote $\zeta (t,\theta ) := \zeta (t , \Gamma (t, \theta ))$ and we set
\begin{align}\label{eq-op-h}
\HH_{m}\zeta(\theta) \coloneqq \frac{ag}{\pi i} \mathrm{f.p.}\int_{-\infty}^{\infty} \sum_{k=0}^{M-1} \frac{(\zeta_{m}(\theta)-\zeta_{k}(\sigma+\theta+\Delta_{km}))\ee^{2a\sigma}\ee^{-2i\theta}}{(1-\ee^{(a+i)\sigma} \ee^{i\Delta_{km}})^{2}}\,\d\sigma ,
\end{align}
and $\Delta_{km}\coloneqq \theta_k-\theta_m = 2\pi (k-m)/M$,
see Section~\ref{sec_linearized_br} for details. Here ``f.p.'' denotes the \emph{finite part}, which is often referred to as the \emph{Hadamard regularization} \cite{hadamard}, see Definition~\ref{def_fp_curve}. Note that the definition of $\mathcal{H}_m\zeta $ involves two difficulties: one is the problem of convergence of the integral as $\sigma \to \infty$, and another one is the finite part, which is concerned with the singularity at $\sigma =0$. Note that the singularity occurs only in the case $m=k$.

In this work we study the structure of the operator $\HH_m$, and we identify a form of perturbation $\zeta$ which exposes its duality. Namely, for any $0\le m\le M-1$, we consider perturbations of the form
\begin{align}\label{intro_form-zeta} 
\zeta_{m}(t,\theta) \coloneqq X_{m}(t) \zeta^{+}_{m}(t,\theta)\ee^{i\theta}+Y_{m}(t)\zeta^{-}_{m}(t,\theta)\ee^{i\theta},\quad t>0, \, \theta\in\R
\end{align}
where $X_{m},Y_{m}:(0,\infty)\to\C$ are smooth functions and 
\begin{align}\label{intro_eq-a-2}
\zeta_{m}^{\pm}(t,\theta) \coloneqq \ee^{\pm i\alpha\ln\frac{\Gamma_{m}(t,\theta)}{g}} = \ee^{\pm i\alpha(2a(\theta-\theta_{m}) + (2\mu -1)\ln t)},\quad t>0, \ \theta\in\R
\end{align}
are bounded oscillations with the parameter $\alpha >0$. In fact, the form \eqref{intro_eq-a-2} can be thought of as a Fourier mode in variable $\theta $ with frequency $2a\alpha$.

In order to state our main result we first set 
\eqnb\label{def_cs}
\begin{split}
c_{0} \coloneqq \frac{ga(a-i)}{(a+i)^{2}}\coth(\pi A/M),\quad c^{\pm}(\alpha) \coloneqq   \frac{g a^{2}(\pm2i\alpha + 1)}{(a+i)^{2}}\coth(\pi B_{\pm}(\alpha) /M),
\end{split}
\eqne
where
\eqnb\label{def_of_A_Bpm}
A\coloneqq -\frac{2ai}{a+i}\quad\text{and}\quad B_{\pm}\coloneqq B_{\pm}(\alpha)\coloneqq -a(\pm 2i\alpha+1)\frac{1+ai}{1+a^{2}},\quad \alpha>0.
\eqne
\begin{theorem}[Main result]\label{thm_main}
Given $M\geq 3$, if $a>0$, $\alpha \in (0,1/2a) \cup (1/2a, \infty)$ are such that
\eqnb\label{main_cond}
c_0 \text{ does not lie on the line passing through }c^{+}, c^{-}
\eqne
(in the complex plane), then there exists a solution $\zeta$ of the form \eqref{intro_form-zeta} to the linearized Birkhoff-Rott equation \eqref{intro_linearization} such that $\|\zeta (t,\,\cdot\,) \|_{L^\infty } \geq C t^{\delta }$ for all $t>0$, where $C,\delta  >0$ are constants dependent only on the parameters $a,\alpha>0$.

Moreover, \eqref{main_cond} holds for all sufficiently large $a>0$ and all $\alpha \ne 1/2a$  sufficiently close to $1/2a$.
\end{theorem}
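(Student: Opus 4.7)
The plan is to substitute the ansatz \eqref{intro_form-zeta} into the linearized Birkhoff--Rott equation \eqref{intro_linearization}, reduce it to a finite-dimensional linear ODE system for the coefficients $X_m(t), Y_m(t)$, and then identify a growing mode. The key observation is that, since $\Delta_{km}=\theta_k-\theta_m$, the exponential profile \eqref{intro_eq-a-2} transforms as
\[
\zeta_k^{\pm}(t,\sigma+\theta+\Delta_{km})=\ee^{\pm 2ia\alpha \sigma}\,\zeta_m^{\pm}(t,\theta),
\]
so that $\HH_m\,\zeta$ factors into $\zeta_m^{\pm}(t,\theta)\ee^{i\theta}$ times a scalar integral in $\sigma$ alone; equivalently, the family $\{\zeta_m^{\pm}\ee^{i\theta}\}$ is an ``eigenbasis'' for $\HH_m$ up to the cyclic coupling in the index $m$. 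After computing $t\partial_t$ and $\partial_\theta$ of the ansatz in closed form and substituting into \eqref{intro_linearization}, I would apply the discrete Fourier transform $\widehat{X}_n\coloneqq \sum_{m=0}^{M-1}X_m\,\ee^{-2\pi imn/M}$ (and similarly for $Y$) to diagonalise the $\Z_M$-cyclic interaction of the Alexander configuration \eqref{alex}. This yields, for each $n\in\{0,\ldots,M-1\}$, a decoupled $2\times 2$ system in $t$; the change of variables $s=\ln t$ converts it into a constant-coefficient linear system whose solutions are combinations of $t^{\lambda}$ (with possible logarithmic corrections in the degenerate case) for $\lambda$ an eigenvalue of the associated matrix $\Lambda_n$.

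The heart of the argument is the explicit computation of the scalar $\sigma$-integrals. Their integrands are rational in $\ee^{(a+i)\sigma}$ multiplied by $\ee^{(2a\pm 2ia\alpha)\sigma}$, with poles forming an arithmetic progression parallel to the imaginary axis. Summing the corresponding residues via a Mittag--Leffler expansion (equivalently, an appropriate contour shift) produces the hyperbolic-cotangent factors $\coth(\pi A/M)$ and $\coth(\pi B_{\pm}(\alpha)/M)$ of \eqref{def_of_A_Bpm}, from which the constants $c_0$ and $c^{\pm}(\alpha)$ of \eqref{def_cs} emerge as the entries of $\Lambda_n$. The main technical obstacle is twofold: handling the Hadamard finite part at $\sigma=0$ in the diagonal $k=m$ term, where the cancellation $\zeta_m^{\pm}(\theta)-\zeta_m^{\pm}(\sigma+\theta)=\zeta_m^{\pm}(\theta)(1-\ee^{\pm 2ia\alpha\sigma})$ lowers the order of the singularity enough that a finite contribution can be extracted (this yields $c_0$); and controlling the oscillatory behaviour of the integrand as $|\sigma|\to\infty$, which dictates the admissible choice of contour. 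The value $\alpha=1/(2a)$ must be excluded because one of these arguments degenerates to a resonance there.

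Once $\Lambda_n$ is in hand, a short algebraic computation expresses its eigenvalues $\lambda_{\pm}$ explicitly in terms of $c_0$ and $c^{\pm}(\alpha)$, and one checks that \eqref{main_cond} is precisely the condition guaranteeing $\max\{\re\lambda_+,\re\lambda_-\}>0$---geometrically, that $c_0$ does not lie on the real line through $c^{+}$ and $c^{-}$. For such an $n$ this produces a solution of the reduced system with $|X_m(t)|\gtrsim t^{\delta}$ for $\delta\coloneqq\max\{\re\lambda_+,\re\lambda_-\}>0$, and hence $\|\zeta(t,\cdot)\|_{L^\infty}\gtrsim t^{\delta}$, as required. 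Finally, the assertion that \eqref{main_cond} holds for large $a$ and $\alpha$ near (but not equal to) $1/(2a)$ follows from a direct asymptotic analysis of $c_0$ and $c^{\pm}(1/(2a))$ as $a\to\infty$: one computes the limits of $A$ and $B_{\pm}(1/(2a))$, evaluates the corresponding $\coth$-factors, and verifies that the three points $c_0,c^{+},c^{-}$ are not collinear in this regime; noncollinearity then persists on an open set of parameters, in particular in a punctured neighbourhood of $\alpha=1/(2a)$ for $a$ large, by continuity.
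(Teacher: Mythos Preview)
Your approach is essentially the paper's, with your discrete Fourier transform in $m$ generalizing the paper's direct choice of the symmetric mode $X_m\equiv X$, $Y_m\equiv Y$ (your $n=0$). Two points of caution. First, the improper integrals defining $\JJ_m\zeta$ at $\sigma\to+\infty$ (and the vanishing of the semicircular contour integrals in the residue computation) rely on the \emph{compatibility conditions} $\sum_k X_k\,\ee^{-i\theta_k}=\sum_k X_k\,\ee^{-2i\theta_k}=0$ (and the same for $Y$); for the Alexander angles $\theta_k=2\pi k/M$ with $M\ge3$ these hold for the mode $n=0$ but fail for $n=1$ and $n=2$, so your DFT does not decouple uniformly and the constants $c^{\pm}$ of \eqref{def_cs} are specific to $n=0$. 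Since one growing mode suffices, restricting to $n=0$ recovers exactly the paper's argument. Second, because \eqref{intro_linearization} carries $\zeta_m^{*}$ on the left-hand side, the reduced system is conjugate-linear---$X$ couples to $Y^{*}$, not $Y$---so a genuine constant-coefficient complex-linear $2\times2$ system requires the substitution $\hat X(t)=\ee^{i(1-2\mu)t/2a}X(\ee^{t})$, $\hat Y=(\hat X)'$ used in the paper; the trace of the resulting matrix is purely imaginary, and \eqref{main_cond} is precisely the condition that the determinant has nonzero imaginary part, forcing one eigenvalue to have positive real part.
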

We emphasize that Theorem~\ref{thm_main} is valid for {\bf all} Alexander spirals with $M\geq 3$ branches.

We also note that, generally speaking,  vortex sheets are known for their lack of stability. For example, the well-known discontinuous shear flows undergo the Kelvin-Helmholtz instability \cite{birkhoff,helmholtz_68,kelvin} as a solution to the Birkhoff-Rott equation, see \cite[Section~9.3]{majda_bertozzi}. A recent result of Protas et al. \cite{protas} shows that the Prandtl-Munk vortex sheet, which is a more involved object, but still geometrically flat, is also linearly unstable. We also note that such vortex sheet does not satisfy the 2D Euler equations \cite{lns}, unlike logarithmic spirals \eqref{log_spiral} (a result of \cite{cko}).

On the other hand, a recent result of Murray and Wilcox \cite{Mur-Wil} proves that the standard circular discontinuous shear flow does not admit Kelvin-Helmholtz instability, which shows that geometry of a vortex sheet could be related to its stability properties. Thus, considering logarithmic spirals \eqref{log_spiral}, the question of their stability is interesting, as their geometry can be thought of as an interpolation between the  straight and the circular vortex sheets. This shows the relevance of the claim of Theorem~\ref{thm_main}: it not only proves linear instability of logarithmic spiral vortex sheets, but also identifies the geometry of the putative unstable perturbations. We also note a similarity of \eqref{intro_form-zeta} to classical analysis of the instability of the flat vortex sheet (see \cite[p. 368]{majda_bertozzi}).

Finally, we conjecture that the geometric constraint \eqref{main_cond} is satisfied for almost all values of  $a>0$, $\alpha \in (0,1/2a)\cup (1/2a , \infty )$, rather than merely the values guaranteed by the last claim of Theorem~\ref{thm_main}.

A recent paper of Jeong and Said \cite{js}  considers logarithmic spirals from the viewpoint of an initial value problem of the $2$D Euler equations with logarithmic spiral scaling invariance in space, and provides a number of local and global well-posedness results, including initial data of the form of a finite sum of Dirac deltas, which corresponds to logarithmic spiral vortex sheets. The paper also points out (as in \cite[Theorem~A]{EJ}) that 2D flows with bounded vorticity and $m$-fold symmetry with $m\geq 3$ are stable due to the fact that such vorticity function generates a Log-Lipschitz velocity, which in turn induces  relaxation. Theorem~\ref{thm_main} is concerned with unbounded vorticity (recall \cite[(1.21)]{cko} that the total circulation in a disc of radius $R>0$ is $\omega (B(0,R)) \sim R^2$, while the flow is irrotational away from the spiral vortex sheets), and so it demonstrates not only that unbounded vorticity can indeed give rise to (linear) instability,  but also that it occurs for all $m$-fold symmetry regimes with $m\geq 3$.

In order to prove Theorem~\ref{thm_main}, in Section 3, we explore the structure of the operator $\mathcal{H}_m$ defined in \eqref{eq-op-h} above, and observe that the choice of the perturbation $\zeta$ in \eqref{intro_form-zeta} allows us to decouple the linearized Birkhoff-Rott equation \eqref{br-lin} into a system of ODEs.
\begin{proposition}[ODE characterization]\label{prop_ODE_reduction}
Suppose that $X_m(t)$, $Y_m(t)$ ($m\in \{ 0, \ldots ,  M-1\}$) satisfy the perturbation compatibility conditions 
\begin{align}\label{cond-1}
\sum_{k=0}^{M-1} X_{k}(t)\ee^{-i\theta_{k}} = \sum_{k=0}^{M-1} Y_{k}(t)\ee^{-i\theta_{k}} = 0
\end{align}
and
\begin{align}\label{cond-2}
\sum_{k=0}^{M-1} X_{k}(t)\ee^{-2i\theta_{k}} = \sum_{k=0}^{M-1} Y_{k}(t)\ee^{-2i\theta_{k}} = 0
\end{align}
for all $t>0$.
Then the perturbation $\zeta$, given by \eqref{intro_form-zeta}, \eqref{intro_eq-a-2}, satisfies the linearized Birkhoff-Rott equation \eqref{intro_linearization} if and only if the functions $X_m(t)$ and $Y_m(t)$ satisfy 
\eqnb\label{intro_linearization_with_ansatz}
\left\{\begin{aligned}
& t\partial_{t}X_{m}(t)+ i\frac{1-2\mu}{2a}X_{m}(t) = -c_{0}^{*}\,Y_{m}(t)^{*} + g\sum_{k=0}^{M-1} c^{-}_{mk}(\alpha)^{*}\,Y_{k}(t)^{*} , & \\
& t\partial_{t}Y_{m}(t) + i\frac{1-2\mu}{2a}Y_{m}(t) = -c_{0}^{*}\,X_{m}(t)^{*} + g\sum_{k=0}^{M-1} c^{+}_{mk}(\alpha)^{*}\,X_{k}(t)^{*}  && 
\end{aligned}\right.
\eqne
for all $m=0,\ldots , M-1$, $t>0$, where coefficients $c_0$ and $c_{mk}^\pm (\alpha )$ are defined in \eqref{def_cs} and \eqref{def_of_cmk}, respectively.
\end{proposition}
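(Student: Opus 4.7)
The plan is to substitute the ansatz \eqref{intro_form-zeta} directly into both sides of \eqref{intro_linearization} and match coefficients of the two linearly independent modes $\zeta_m^{\pm}(t,\theta)\ee^{-i\theta}$ that appear after expansion. The key algebraic observation making this work is the shift identity
\[
\zeta_k^{\pm}(t,\sigma+\theta+\Delta_{km}) = \ee^{\pm 2ai\alpha\sigma}\,\zeta_m^{\pm}(t,\theta),
\]
which follows immediately from $\sigma+\theta+\Delta_{km}-\theta_k=\sigma+\theta-\theta_m$ and the definition \eqref{intro_eq-a-2}. This identity lets the $\theta$-dependence be factored cleanly out of the singular integral defining $\mathcal{H}_m\zeta$.

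For the left-hand side, I would first use $(\zeta_m^{+})^* = \zeta_m^{-}$ to rewrite $\zeta_m^*$, and then apply $t\partial_t\zeta_m^{\pm} = \pm i\alpha(2\mu-1)\zeta_m^{\pm}$ together with $\partial_\theta\zeta_m^{\pm} = \pm 2ai\alpha\zeta_m^{\pm}$. A short algebraic cancellation shows that the $\alpha$-dependent pieces annihilate each other, leaving
\[
\bigl(t\partial_t - \tfrac{2\mu-1}{2a}\partial_\theta\bigr)\zeta_m^* = \zeta_m^{-}\ee^{-i\theta}\bigl[t\partial_t X_m^* + i\tfrac{2\mu-1}{2a}X_m^*\bigr] + \zeta_m^{+}\ee^{-i\theta}\bigl[t\partial_t Y_m^* + i\tfrac{2\mu-1}{2a}Y_m^*\bigr].
\]
For the right-hand side, the shift identity lets me write $\mathcal{H}_m\zeta(\theta) = \zeta_m^{+}(\theta)\ee^{-i\theta}\Phi_m(X) + \zeta_m^{-}(\theta)\ee^{-i\theta}\Psi_m(Y)$ for explicit linear forms $\Phi_m, \Psi_m$ in $(X_k), (Y_k)$ given by finite-part $\sigma$-integrals. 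Matching modes then yields one decoupled identity relating $X_m^*$ to $(Y_k)$ and a second relating $Y_m^*$ to $(X_k)$.

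The main technical content is to evaluate these $\sigma$-integrals and identify them with the coefficients of \eqref{def_cs}. Contributions with $k=m$ carry a double pole at $\sigma=0$ treated via the Hadamard finite part (Definition~\ref{def_fp_curve}), while the $k\neq m$ contributions are convergent. I would evaluate both by closing a contour in the complex $\sigma$-plane and summing residues at the poles $\sigma = (2\pi n - \Delta_{km})i/(a+i)$, $n\in\Z$; the resulting geometric series produces a $\coth$ factor, and this is precisely where the arguments $A$ and $B_\pm(\alpha)$ from \eqref{def_of_A_Bpm} enter. The same contour deformation generates ancillary terms proportional to sums of the form $\sum_k X_k\ee^{-ij\theta_k}$ and $\sum_k Y_k\ee^{-ij\theta_k}$ with $j=1,2$, and the compatibility conditions \eqref{cond-1}--\eqref{cond-2} are exactly what is needed to eliminate these.

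Once $\Psi_m(Y) = c_0 Y_m - g\sum_k c_{mk}^{-}(\alpha) Y_k$ and $\Phi_m(X) = c_0 X_m - g\sum_k c_{mk}^{+}(\alpha) X_k$ are established, the matching identities read $t\partial_t X_m^* + i\tfrac{2\mu-1}{2a}X_m^* = -\Psi_m(Y)$ and analogously for $Y_m^*$; taking complex conjugates converts these into \eqref{intro_linearization_with_ansatz}. The hardest part will be the contour-based evaluation of the singular integrals, in particular the bookkeeping of which residue contributions survive the summation over $k=0,\ldots,M-1$ and the verification that the leftover pieces are precisely those killed by the compatibility conditions; this is also where the concrete shape of $c_{mk}^{\pm}(\alpha)$ in \eqref{def_of_cmk} is dictated.
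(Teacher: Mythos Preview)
Your proposal is correct and follows essentially the same route as the paper: the decomposition $\mathcal{H}_m=\mathcal{K}_m-\mathcal{J}_m$, the shift identity for $\zeta_k^\pm$, direct evaluation of the left-hand side, contour integration with residue summation producing the $\coth$ factors, use of \eqref{cond-1}--\eqref{cond-2} to kill the extraneous $\sum_k X_k\ee^{-ij\theta_k}$ terms, and matching via linear independence of $\zeta_m^\pm$ are exactly what the paper does in Section~\ref{sec_ode_char}. The only point worth flagging is that the paper handles the finite-part integrals not by direct contour closure but by first applying the integration-by-parts identity \eqref{eq-fp-pv} to convert each f.p.\ integral into a p.v.\ integral plus boundary terms (which vanish as $r\to\infty$ thanks again to the compatibility conditions), and only then applies the residue calculus; you will want to make this reduction explicit, since closing a contour directly on a double-pole integrand requires more care than your sketch suggests.
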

We now show how Theorem~\ref{thm_main} can be proved using Proposition~\ref{prop_ODE_reduction} and a few other facts, which we verify in detail in Sections~\ref{sec_summing}--\ref{sec_imag_part} below.

First, thanks to the ODE characterization provided by Proposition~\ref{prop_ODE_reduction}, we note that the symmetric choice of $X_m$'s and $Y_m$'s, i.e.
\begin{equation*}
X_m(t) =X(t),\quad Y_m (t) = Y(t) \quad \text{for} \ \ m=0,\ldots , M-1,
\end{equation*}
satisfies the perturbation compatibility conditions \eqref{cond-1}--\eqref{cond-2} for any functions $X(t)$, $Y(t)$, and then the ODE system \eqref{intro_linearization_with_ansatz} of $2M$ equations simplifies into $2$ equations,
\eqnb
\left\{\begin{aligned}
& \partial_{t}\left(t^{i\frac{1-2\mu}{2a}}X(t)\right) = -t^{-1+i\frac{1-2\mu}{2a}}(c_{0}^{*} - c^{-}(\alpha)^{*})Y(t)^{*}, \\
& \partial_{t}\left(t^{i\frac{1-2\mu}{2a}}Y(t)\right) = -t^{-1+i\frac{1-2\mu}{2a}}(c_{0}^{*} - c^{+}(\alpha)^{*})X(t)^{*} 
\end{aligned}\right.
\eqne
for $t>0$, which follows immediately from the identity
\eqnb\label{c_pm_sums}
c^{\pm} =g \sum_{k=0}^{M-1} c_{mk}^\pm \quad \text{for} \ \ m=0,\ldots, M-1,
\eqne
which we prove in Section~\ref{sec_summing}.

Thus, since $| {\zeta}_m^{\pm} | =1$ for all $m=0,\ldots,M-1$, the question of growth of $\| \zeta (t,\,\cdot\,) \|_{L^\infty} $ in $t$ reduces to studying the growth of $|X(t)|+|Y(t)|$. Applying the rescaling
\begin{equation*}
\begin{split}
\hat X(t) &\coloneqq \ee^{i\frac{1-2\mu }{2a}t} X(\ee^{t}), \\
\hat Y(t) &\coloneqq \left( \hat X(t) \right)' = -(c_0 - c^- (\alpha ) )^* \left( Y(\ee^t ) \ee^{i\frac{1-2\mu }{2a} t } \right)^* \ee^{i \frac{1-2\mu }{a} t}
\end{split}
\end{equation*}
we obtain 
\eqnb\label{intro_eq-matrix}
\frac{\d }{\d {t}}\begin{pmatrix} \hat X(t) \\[5pt] \hat Y(t) \end{pmatrix} = \begin{pmatrix} 0 & 1 \\[5pt] (c_{0} - c^{-}(\alpha))^*(c_{0} - c^{+}(\alpha)) & i\frac{1-2\mu}{a} \end{pmatrix} \begin{pmatrix} \hat X(t) \\[5pt] \hat Y(t)\end{pmatrix},\quad t\in\R.
\eqne

Thus in order to establish Theorem~\ref{thm_main} it suffices to show that one of the eigenvalues $\lambda_1,\lambda_2 \in \C$ of the matrix in \eqref{intro_eq-matrix} above has positive real part $\delta >0$. Then there exists a constant $C>0$ such that $|\hat X(t)|+|\hat Y(t)| \geq C \ee^{\delta t}$ for all $t\in \R$, and rescaling back we obtain  $| X(t)|+| Y(t)| \geq C t^\delta $ for all $t>0 $, as required.

In order to find the positive real part $\delta$, we note that, since the trace of the matrix in \eqref{intro_eq-matrix} is purely imaginary, the real parts of $\lambda_1,\lambda_2$ have opposite signs, that is
\[
\lambda_1 = u + w_1i,\qquad \lambda_2 = -u+w_2i
\]
for some $u,w_1,w_2\in\R$. Thus it suffices to show that $u\ne 0$. Computing the determinant of the matrix in \eqref{intro_eq-matrix} we obtain
\[
-{(c_0 - c^-)}^* (c_0 - c^+) = \lambda_1 \lambda_2 = (u+w_1i)(-u+w_2i)=-u^2-w_1w_2+u(w_2-w_1)i.
\]
Thus if
\eqnb\label{no_imag_part}
\im \left( {(c_0 - c^-)}^* (c_0 - c^+)  \right) \ne 0
\eqne
then $u\ne 0$, as needed. Clearly, \eqref{no_imag_part} is equivalent to the complex numbers $c_0-c^-$, $c_0-c^+$ not being parallel (as vectors in the complex plane over $\R$), which in turn is equivalent to \eqref{main_cond}.

In order to prove \eqref{main_cond} for $a$, $\alpha$ as in the last claim of Theorem~\ref{thm_main}, we give a geometric argument using asymptotic analysis as $a\to \infty$ and $\alpha \to 1/2a$, see Section~\ref{sec_imag_part} for details.

We note that Theorem~\ref{thm_main} says that growth of the perturbations of order 
$\epsilon$ is only polynomial, which is much slower than the instability rate of the flat shear flow vortex sheet (see \cite[Chapter 9]{majda_bertozzi}).  
We also note that the Kelvin-Helmholtz instability is related with a putative creation of a mixing zone propagating from a vortex sheet, see for example the work of Otto~\cite{otto}, who derived the growth rate in the case of the incompressible porous media flow, as well as subsequent example of Sz\'ekelyhidi Jr \cite{szekelyhidi_cr,szekelyhidi_en} in the case of a flat vortex sheet. We also note a subsequent result of Castro, C\'ordoba and Faraco \cite{ccf}, which construct the first example of a mixing layer arising from a non-flat vortex sheet in the Muskat problem, as well as recent result \cite{men_szeke} of Mengual and Sz\'ekelyhidi Jr, which is concerned with mixing layers for non-flat initial vortex sheets in the $2D$ Euler equations. We emphasize, that in each of these examples the growth rate of the mixing layer is at most polynomial.

One could speculate that this slower rate of growth of instabilities is responsible for the appearance of spiral-like structures in fluids, see \cite{ever} for instance. This makes the open question of nonlinear instability \cite{fsv,lin} very interesting, particularly considering the putative growth rate. Finally, let us  note that there are instances of geometric structures appearing in nature, despite their linear instability, for example the rings of Saturn, whose linear instability was established by Maxwell \cite{max}.

In the following Section~\ref{sec_prelims} we  introduce some preliminary concepts, such as the notion of the \emph{finite part} of an integral, as well as derive the linearized Birkhoff-Rott equation \eqref{intro_linearization}. We then prove the ODE characterization (Proposition~\ref{prop_ODE_reduction}) in Section~\ref{sec_ode_char}, we provide a verification of the identity \eqref{c_pm_sums} in Section~\ref{sec_summing} and we verify the geometric condition \eqref{main_cond} in Section~\ref{sec_imag_part}.

\section{Preliminaries}\label{sec_prelims}

\subsection{The finite part}

Given a smooth function $f:[a,b]\to\R$ and $x\in(a,b)$, we define the \emph{finite part} of $f$ (Hadamard regularization) as
\begin{equation}
\begin{aligned}\label{fp}
\mathrm{f.p.}\int_{a}^{b} \frac{f(t)}{(t-x)^{2}}\,\d t  & \coloneqq  \lim_{\ve\to 0^{+}} \left(\int_{a}^{x-\ve}\frac{f(t)}{(t-x)^{2}}\,\d t +
\int_{x+\ve}^{b}\frac{f(t)}{(t-x)^{2}}\,\d t-\frac{f(x+\ve)+f(x-\ve)}{\ve}\right)
\end{aligned}
\end{equation}
see the classical exposition of Hadamard \cite[Book III, Chapter I]{hadamard} for details. It is known that under the above smoothness assumption the above limits exist and they are related to the principal value
\begin{align}\label{pv}
\mathrm{p.v.}\int_{a}^{b}\frac{f(t)}{t-x}\,\d t = \lim_{\ve\to 0^{+}} \left(\int_{a}^{x-\ve}\frac{f(t)}{t-x}\,\d t +
\int_{x+\ve}^{b}\frac{f(t)}{t-x}\,dt\right),
\end{align}
via a derivative in the sense that
\begin{align*}
\frac{\d}{\d x}\left(\mathrm{p.v.}\int_{a}^{b}\frac{f(t)}{t-x}\,\d t\right) = \mathrm{f.p.}\int_{a}^{b} \frac{f(t)}{(t-x)^{2}}\,\d t,
\end{align*}
which can be proved directly from the fact that the limits \eqref{fp} and \eqref{pv} are uniform with respect to $x$ in compact subsets of $(a,b)$.
\begin{definition}\label{def_fp_curve}
Let $\gamma \colon [a,b]\to\C$ be a smooth curve satisfying $\gamma'(t)\neq 0$ for all $t\in[a,b]$. Given $x\in(a,b)$, we define
\begin{align*}
\mathrm{f.p.}\!\int_{a}^{b} \frac{f(t)\,\d t}{(\gamma(t)-\gamma(x))^{2}}\!\coloneqq\! \mathrm{f.p.}\!\int_{a}^{b} \frac{g(t)\,\d t}{(t-x)^{2}}, \ \ \text{where}
 \ \ g(t)\!\coloneqq\!  f(t)\left(\frac{\gamma(t)-\gamma(x)}{t-x}\right)^{-2}\ \ \text{for} \ \ t\in [a,b].
\end{align*}
\end{definition}
In particular, by \eqref{fp}, we have
\begin{equation}
\begin{aligned}\label{eq-pv-2}
&\mathrm{f.p.}\int_{a}^{b} \!\frac{f(t)\,\d t}{(\gamma(t)-\gamma(x))^{2}} \\
& = \!\!\lim_{\ve\to 0^{+}}\!\! \left(\int_{a}^{x-\ve}\!\!\frac{f(t)\,\d t}{(\gamma(t)\!-\!\gamma(x))^{2}}\!+\!\int_{x+\ve}^{b}\!\!\frac{f(t)\,\d t}{(\gamma(t)-\gamma(x))^{2}}\!-\!\frac{\ve f(x+\ve)}{(\gamma(x+\ve)\!-\!\gamma(x))^{2}} \!-\! \frac{\ve f(x-\ve)}{(\gamma(x-\ve)\!-\!\gamma(x))^{2}}\right).
\end{aligned}
\end{equation}
In the appendix (Section~\ref{appendix}) we provide some more identities related to the notion of the finite part, and we also prove the identity
\begin{align}\label{eq-fp-pv}
\mathrm{f.p.}\int_{a}^{b} \frac{f(t)\,\d t}{(\gamma(t)\!-\!\gamma(x))^{2}} = \mathrm{p.v.}\int_{a}^{b}\frac{\left(f(t)/\gamma'(t)\right)'\d t}{\gamma(t)-\gamma(x)} - \frac{f(b)}{\gamma'(b)(\gamma(b)\!-\!\gamma(x))} + \frac{f(a)}{\gamma'(a)(\gamma(a)\!-\!\gamma(x))},
\end{align}
which is valid for every $x\in (a,b)$, see Proposition~\ref{prop_fp_vs_pv}.

\subsection{Linearized Birkhoff-Rott equation}\label{sec_linearized_br}
Since $Z_m (t,\Gamma_m)$ (as in \eqref{log_spiral}--\eqref{alex})  satisfies the Birkhoff-Rott equation \eqref{B-R}, applying the perturbation \eqref{perturbation}, we obtain
\begin{align*}
\left(\partial_{t}Z_{m}(t,\Gamma_{m}) + \ve\partial_{t}\zeta_{m}(t,\Gamma_{m})\right)^{*} = \frac{1}{2\pi i} \mathrm{p.v.}\!\int\sum_{k=0}^{M-1} \frac{\d\Gamma_{k}}{Z_{m}(t,\Gamma_{m})\!-\!Z_{k}(t,\Gamma_{k}) \!+\! \ve(\zeta_{m}(t,\Gamma_{m})\!-\!\zeta_{k}(t,\Gamma_{k}))}
\end{align*}
for $0\le m\le M-1$. Using the Taylor series expansion
\begin{align*}
\frac{1}{x+\ve y} = \frac{1}{x} \frac{1}{1+\ve y/x} = \frac{1}{x}\sum_{k=0}^{\infty} (-\ve y/x)^{k} = \sum_{k=0}^{\infty} (-\ve)^{k} \frac{y^{k}}{x^{k+1}},
\end{align*}
which is valid for sufficiently small $\ve>0$ and $x\in\R$, $y\in\R\setminus\{0\}$, we obtain
\begin{align*}
\left(\partial_{t}Z_{m}(t,\Gamma_{m}) + \ve\partial_{t}\zeta_{m}(t,\Gamma_{m})\right)^{*} 
& = \frac{1}{2\pi i} \mathrm{p.v.}\int\sum_{k=0}^{M-1} \frac{\d\Gamma_{k}}{Z_{m}(t,\Gamma_{m})-Z_{k}(t,\Gamma_{k})} \\
&\quad -\frac{\ve}{2\pi i} \mathrm{f.p.}\int \sum_{k=0}^{M-1} \frac{\zeta_{m}(t,\Gamma_{m})-\zeta_{k}(t,\Gamma_{k})}{[Z_{m}(t,\Gamma_{m})-Z_{k}(t,\Gamma_{k})]^{2}}\,\d\Gamma_{k} + O(\ve^{2})
\end{align*}
as $\ve\to 0$. Considering only the terms at $\varepsilon$ we obtain a linearization of the Birkhoff-Rott equation,
\begin{align}\label{br-lin}
\partial_{t}\zeta_{m}(t,\Gamma_{m})^{*} = -\frac{1}{2\pi i} \mathrm{f.p.}\int\sum_{k=0}^{M-1} \frac{\zeta_{m}(t,\Gamma_{m})-\zeta_{k}(t,\Gamma_{k})}{[Z_{m}(t,\Gamma_{m})-Z_{k}(t,\Gamma_{k})]^{2}}\,\d\Gamma_{k},\quad 0\le m\le M-1.
\end{align}
We now change the variable $\Gamma_k \mapsto \theta$, by using the parametrization of $Z_{m}(t,\theta )$ and $\Gamma_{m}(t,\theta )$ from \eqref{log_spiral}. To this end, we apply the slight abuse of notation by writing $\zeta_{m}(t,\theta):=\zeta_{m}(t,\Gamma_{m}(t,\theta))$, which after differentiation gives
\begin{align*}
\partial_{\theta}\zeta_{m}(t,\theta) = \partial_{\theta}[\zeta_{m}(t,\Gamma_{m}(t,\theta))] = \partial_{\Gamma_{m}}\zeta_{m}(t,\Gamma_{m}(t,\theta))\partial_{\theta}\Gamma_{m}(t,\theta).
\end{align*}
Thus we obtain
\begin{equation}
\begin{aligned}\label{eq-rr1}
\partial_{t}\zeta_{m}(t,\theta) & = \partial_{t}[\zeta_{m}(t,\Gamma_{m}(t,\theta))] = \partial_{t}\zeta_{m}(t,\Gamma_{m}(t,\theta)) + \partial_{\Gamma_{m}}\zeta_{m}(t,\Gamma_{m}(t,\theta))\partial_{t}\Gamma_{m}(t,\theta) \\
& = \partial_{t}\zeta_{m}(t,\Gamma_{m}(t,\theta)) + \frac{\partial_{\theta}\zeta_{m}(t,\theta)}{\partial_{\theta}\Gamma_{m}(t,\theta)}\partial_{t}\Gamma_{m}(t,\theta).
\end{aligned}
\end{equation}
Combining \eqref{eq-rr1} with the equality
\begin{align*}
\frac{\partial_{t}\Gamma_{m}(t,\theta)}{\partial_{\theta}\Gamma_{m}(t,\theta)} = \frac{g(2\mu -1)t^{2\mu-2}  \ee^{2a (\theta - \theta_{m} )}}{2ag t^{2\mu-1}  \ee^{2a (\theta - \theta_{m} )}} = \frac{2\mu -1}{2a t}
\end{align*}
gives
\begin{align}\label{rr-2}
\partial_{t}\zeta_{m}(t,\Gamma_{m}(t,\theta)) = \partial_{t}\zeta_{m}(t,\theta) - \frac{2\mu -1}{2a t}\partial_{\theta}\zeta_{m}(t,\theta).
\end{align}
Substituting \eqref{rr-2} and \eqref{log_spiral} into \eqref{br-lin} yields the linearized Birkhoff-Rott equation in the new variable 
\begin{equation*}
\begin{aligned}
&\partial_{t}\zeta_{m}(t,\theta)^{*} - \frac{2\mu -1}{2a t}\partial_{\theta}\zeta_{m}(t,\theta)^{*}  =
-\frac{a}{\pi i} \mathrm{f.p.}\int_{-\infty}^{\infty} \sum_{k=0}^{M-1} \frac{\zeta_{m}(t,\theta)-\zeta_{k}(t,\theta')}{[Z_{m}(t,\theta)-Z_{k}(t,\theta')]^{2}}\Gamma_{k}(\theta')\,\d\theta' \\
&\qquad = -\frac{a}{\pi i} \mathrm{f.p.}\int_{-\infty}^{\infty} \sum_{k=0}^{M-1} \frac{\zeta_{m}(t,\theta)-\zeta_{k}(t,\theta')}{(t^\mu \ee^{a (\theta - \theta_{m})} \ee^{i\theta }-t^\mu \ee^{a(\theta' - \theta_{k})} \ee^{i\theta'})^{2}}gt^{2\mu -1 }  \ee^{2a (\theta' - \theta_{k} )}\,\d\theta'\\
&\qquad = -\frac{a}{t\pi i} \mathrm{f.p.}\int_{-\infty}^{\infty} \sum_{k=0}^{M-1} \frac{(\zeta_{m}(t,\theta)-\zeta_{k}(t,\theta'))g\ee^{2a (\theta' - \theta_{k} )}}{(\ee^{a (\theta - \theta_{m})} \ee^{i\theta }-\ee^{a(\theta' - \theta_{k})} \ee^{i\theta'})^{2}}\,\d\theta'.
\end{aligned}
\end{equation*}
Therefore, the formal change of variables $\sigma\coloneqq \theta'-\theta+\theta_{m}-\theta_{k}$ and $ \Delta_{km} \coloneqq \theta_{k}-\theta_{m}$, gives \eqref{intro_linearization}, i.e.
\begin{align*}
t\partial_{t}\zeta_{m}(t,\theta)^{*} - \frac{2\mu -1}{2a}\partial_{\theta}\zeta_{m}(t,\theta)^{*}
=-\mathcal{H}_{m}\zeta\,(t,\theta),\quad t>0, 
\end{align*}
for $m=0\ldots,M-1$, where $\zeta (t,\cdot )=(\zeta_{0}(t,\cdot ),\ldots,\zeta_{M-1}(t,\cdot ))\in L^{\infty}(\R,\C^{M})$ and $\mathcal{H}_m$ is defined in \eqref{eq-op-h}, as required.

\section{Proof of the ODE characterization}\label{sec_ode_char}
In this section we prove Proposition~\ref{prop_ODE_reduction}, namely we show that if the compatibility conditions \eqref{cond-1}--\eqref{cond-2} hold, that is,
\[\begin{aligned}
&\sum_{k=0}^{M-1} X_{k}(t)\ee^{-i\theta_{k}} = \sum_{k=0}^{M-1} Y_{k}(t)\ee^{-i\theta_{k}} = 0,\\
&\sum_{k=0}^{M-1} X_{k}(t)\ee^{-2i\theta_{k}} = \sum_{k=0}^{M-1} Y_{k}(t)\ee^{-2i\theta_{k}} = 0
\end{aligned} \]
for $t>0$, then the perturbation $\zeta$, defined in \eqref{intro_form-zeta}, satisfies the linearized Birkhoff-Rott equation \eqref{intro_linearization} if and only if $X_m(t)$, $Y_m(t)$ ($m=0,\ldots , M-1$) are solutions of the system
\eqnb\label{intro_linearization_with_ansatz_repeat}
\left\{\begin{aligned}
& t\partial_{t}X_m(t)+ i\frac{1-2\mu}{2a}X_m(t) = -c_{0}^{*}\,Y_{m}(t)^{*} + g\sum_{k=0}^{M-1} c^{-}_{mk}(\alpha)^{*}\,Y_{k}(t)^{*},  \\
& t\partial_{t}Y_{m}(t) + i\frac{1-2\mu}{2a}Y_{m}(t) = -c_{0}^{*}\,X_{m}(t)^{*} + g\sum_{k=0}^{M-1} c^{+}_{mk}(\alpha)^{*}\,X_{k}(t)^{*}
\end{aligned}\right.
\eqne
for all $m=0,\ldots , M-1$, $t>0$. In the above equation $c_0= \frac{ga(a-i)}{(a+i)^{2}}\coth(\pi A/M) $ (recall \eqref{def_cs}) and
\begin{equation}\label{def_of_cmk}
\begin{aligned}
c_{mk}^{+}(\alpha) & \coloneqq  \frac{a^{2}(2i\alpha + 1)}{(a+i)^{2}} \frac{\mathcal{B}^{+}_{mk}}{\sinh(\pi B_{+})}, &&\quad \alpha>0,\ \alpha\neq1/2a,\\
c_{mk}^{-}(\alpha) & \coloneqq \frac{a^{2}(-2i\alpha + 1)}{(a+i)^{2}} \frac{\mathcal{B}^{-}_{mk}}{\sinh(\pi B_{-})}, &&\quad \alpha>0,
\end{aligned}
\end{equation}
where $B_\pm$ is defined in \eqref{def_of_A_Bpm} 
and
\eqnb\label{def_Bmk}
\mathcal{B}^{\pm}_{mk}(\alpha)\coloneqq \ee^{\Delta_{km}B_{\pm}}\left\{\begin{aligned}
& \ee^{-\pi B_{\pm}} && \text{if } \ \Delta_{km}>0,\\
& \cosh(\pi B_{\pm}) && \text{if } \ \Delta_{km}=0 ,\\
& \ee^{\pi B_{\pm}} && \text{if } \ \Delta_{km}<0.
\end{aligned}\right.
\eqne
To this end we first study the structure of the operator $\HH$ defined in \eqref{eq-op-h}, where
\[
\HH_{m}\zeta(\theta) = \frac{ag}{\pi i}\,\mathrm{f.p.}\int_{-\infty}^{\infty} \sum_{k=0}^{M-1} \frac{(\zeta_{m}(\theta)-\zeta_{k}(\sigma+\theta+\Delta_{km}))\ee^{2a\sigma}\ee^{-2i\theta}}{(1-\ee^{(a+i)\sigma} \ee^{i\Delta_{km}})^{2}}\,\d\sigma ,
\]
for $m=0,\ldots,M-1$, and we decompose it into
\[
\HH=\KK-\JJ,
\]
where
\begin{align}\label{d-h-1}
\KK_{m}\zeta(\theta)& \coloneqq \frac{ag}{\pi i}\,\mathrm{f.p.}\int_{-\infty}^{\infty} \sum_{k=0}^{M-1} \frac{\zeta_{m}(\theta)\ee^{2a\sigma}\ee^{-2i\theta}}{(1-\ee^{(a+i)\sigma} \ee^{i\Delta_{km}})^{2}}\,\d\sigma, \\ \label{d-h-2}
\JJ_{m}\zeta(\theta) & \coloneqq \frac{ag}{\pi i}\,\mathrm{f.p.}\int_{-\infty}^{\infty} \sum_{k=0}^{M-1} \frac{\zeta_{k}(\sigma+\theta+\Delta_{km})\ee^{2a\sigma}\ee^{-2i\theta}}{(1-\ee^{(a+i)\sigma} \ee^{i\Delta_{km}})^{2}}\,\d\sigma,
\end{align}
for $m=0,\ldots,M-1$ and $\zeta=(\zeta_{0},\ldots,\zeta_{M-1})\in L^{\infty}(\R,\C^{M})$. Then we characterize the diagonal part $\KK   $ and the remaining part $\JJ $. To be precise, we show in Section \ref{sec_H1} that
\eqnb\label{claim_of_sec31}
\KK \zeta = c_0 \zeta \ee^{-2i\theta}\quad\text{for} \ \  \zeta \in L^{\infty}(\R,\C^{M})
\eqne
and, in Section \ref{sec_H2}, we show that, if the compatibility conditions \eqref{cond-1}--\eqref{cond-2} hold, then
\eqnb\label{claim_of_sec_32}
\JJ_m \zeta (t,\theta )= \zeta^{+}_{m}(t,\theta)\ee^{-i\theta}g \sum_{k=0}^{M-1} X_{k}(t) c_{mk}^{+}(\alpha) + \zeta^{-}_{m}(t,\theta)\ee^{-i\theta}g\sum_{k=0}^{M-1} Y_{k}(t) c_{mk}^{-}(\alpha)
\eqne
for $m=0,\ldots , M-1$, $t>0$, provided $\zeta$ is of the form \eqref{intro_form-zeta}, that is,
\[
\zeta_{m}(t,\theta) = X_{m}(t) \zeta^{+}_{m}(t,\theta)\ee^{i\theta}+Y_{m}(t)\zeta^{-}_{m}(t,\theta)\ee^{i\theta}, 
\]
where
\[
\zeta_{m}^{\pm}(t,\theta) = \ee^{\pm i\alpha\ln\frac{\Gamma_{m}(t,\theta)}{g}} = \ee^{\pm i\alpha(2a(\theta-\theta_{m}) + (2\mu -1)\ln t)},
\]
for $t>0$, $\theta\in\R$ and $m=0,\ldots , M-1$.
Note that we have the following lemma.
\begin{lemma}[Linear independence of $\zeta^\pm$]\label{lem_LI}
For each $k\in \{ 0,\ldots , M-1\}$ and  $t>0$, the functions ${\zeta}_k^\pm (t,\,\cdot\,)$ are linearly independent in the sense that if, for some $p,q\in \C$,
\begin{align*}
p\zeta_{k}^{+}(t,\theta) + q\zeta_{k}^{-}(t,\theta) = 0\quad \text{for all} \ \ \theta\in \R,
\end{align*}
then $p=q=0$.
\end{lemma}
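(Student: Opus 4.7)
The plan is to reduce the identity to linear independence of two complex exponentials in $\theta$ with distinct frequencies, which is an elementary fact. Fix $t>0$ and $k\in\{0,\ldots,M-1\}$. Using the explicit formula \eqref{intro_eq-a-2}, I would factor out the $\theta$-independent pieces by writing
\[
\zeta_k^{\pm}(t,\theta) = C_{\pm}(t,k)\,\ee^{\pm 2ia\alpha\,\theta},\qquad C_{\pm}(t,k) \coloneqq \ee^{\pm i\alpha(-2a\theta_k + (2\mu-1)\ln t)}.
\]
Since $|C_{\pm}(t,k)|=1$, both factors are nonzero, so the assumed relation $p\zeta_k^{+}(t,\theta)+q\zeta_k^{-}(t,\theta)=0$ is equivalent to
\[
\tilde p\,\ee^{2ia\alpha\,\theta} + \tilde q\,\ee^{-2ia\alpha\,\theta} = 0 \quad\text{for all } \theta\in\R,
\]
where $\tilde p \coloneqq p\,C_{+}(t,k)$ and $\tilde q \coloneqq q\,C_{-}(t,k)$.

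Next I would exploit $a>0$ and $\alpha>0$, so $2a\alpha\neq 0$, hence the two exponentials $\ee^{2ia\alpha\theta}$ and $\ee^{-2ia\alpha\theta}$ carry distinct frequencies. Concretely, evaluating at $\theta=0$ yields $\tilde p+\tilde q=0$, and differentiating in $\theta$ and then evaluating at $\theta=0$ gives $2ia\alpha(\tilde p-\tilde q)=0$, so $\tilde p=\tilde q=0$. Dividing by the nonzero constants $C_{\pm}(t,k)$ yields $p=q=0$, as required.

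The computation contains no subtlety; the only point worth checking is that $2a\alpha\neq 0$, which is immediate from the standing assumptions $a>0$ and $\alpha>0$ in the statement (and throughout the paper). Thus no asymptotic or analytic obstacle arises, and the lemma follows from a two-line evaluation argument.
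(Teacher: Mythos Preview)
Your argument is correct and essentially mirrors the paper's own proof: both use the original identity together with its $\theta$-derivative to obtain a $2\times 2$ linear system, invertible because $2a\alpha\neq 0$. The only cosmetic difference is that you first factor out the $\theta$-independent constants $C_\pm(t,k)$ and then evaluate at $\theta=0$, whereas the paper works directly with $\zeta_k^\pm$ and computes the determinant of the resulting matrix at a generic $\theta$.
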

\begin{proof}
Differentiating the above assumption in $\theta$ gives $2ia\alpha p \zeta_{k}^{+}(t,\theta) -2ia\alpha  q \zeta_{k}^{-}(t,\theta) = 0$,
which leads to the system of equations
\begin{equation*}
\begin{pmatrix}
\zeta_{k}^{+}(t,\theta) & \zeta_{k}^{-}(t,\theta)\\ \zeta_{k}^{+}(t,\theta)  &
 -\zeta_{k}^{-}(t,\theta) \\
\end{pmatrix}
\begin{pmatrix}
p \\ q
\end{pmatrix} = 0 \quad \text{for all} \ \ \theta \in \R.
\end{equation*}
A direct computation shows that the  matrix of the system is invertible, which proves the claim.
\end{proof}

Thus, in order to prove Proposition~\ref{prop_ODE_reduction}, we first recall \eqref{intro_linearization}, that is
\[
t\partial_{t}\zeta_{m}(t,\theta)^{*} - \frac{2\mu -1}{2a}\partial_{\theta}\zeta_{m}(t,\theta)^{*} =-\mathcal{H}_{m}\zeta\,(t,\theta)
\]
for $m=0,\ldots,M-1$. A direct computation shows that
\[\begin{split}
&t\partial_{t}\zeta_{m}(t,\theta) - \frac{2\mu -1}{2a}\partial_{\theta}\zeta_{m}(t,\theta),  \\
&\qquad = \left(t\partial_{t}X_{m}(t)+ i\frac{1-2\mu}{2a}X_{m}(t)\right)  \zeta^{+}_{m}(t,\theta)\ee^{i\theta}
+ \left(t\partial_{t}Y_{m}(t) + i\frac{1-2\mu}{2a}Y_{m}(t)\right)  \zeta^{-}_{m}(t,\theta)\ee^{i\theta}.
\end{split}\]
Thus, noting that $(\zeta_m^+)^*=\zeta_m^-$ (and vice versa) the above claims \eqref{claim_of_sec31}--\eqref{claim_of_sec_32} show that \eqref{intro_linearization} holds if and only if
\[\begin{split}
&\left(t\partial_{t}X_{m}(t)+ i\frac{1-2\mu}{2a}X_{m}(t)\right)^{*}  \zeta^{-}_{m}(t,\theta)\ee^{-i\theta}
+ \left(t\partial_{t}Y_{m}(t) + i\frac{1-2\mu}{2a}Y_{m}(t)\right)^{*} \zeta^{+}_{m}(t,\theta)\ee^{-i\theta}\\
&\qquad = -( \KK_m \zeta (t,\theta ) - \JJ_m \zeta (t,\theta ) )  \\
&\qquad= -c_0( X_{m}(t) \zeta^{+}_{m}(t,\theta)+Y_{m}(t)\zeta^{-}_{m}(t,\theta)) \ee^{-i\theta } \\
&\qquad\qquad + \zeta^{+}_{m}(t,\theta)\ee^{-i\theta}g \sum_{k=0}^{M-1} X_{k}(t) c_{mk}^{+}(\alpha) + \zeta^{-}_{m}(t,\theta)\ee^{-i\theta}g\sum_{k=0}^{M-1} Y_{k}(t) c_{mk}^{-}(\alpha)
\end{split}
\]
for all $\theta \in \R$. Hence, it remains to notice that $\ee^{-i\theta }$ cancels out, and so, due to the linear independence of $\zeta^\pm$ (Lemma~\ref{lem_LI}), the last equality is equivalent to \eqref{intro_linearization_with_ansatz_repeat}, as required.

\subsection{Operator $\KK $}\label{sec_H1}
As mentioned above, in this section we show that $\KK\zeta = c_0 \zeta \ee^{-2i\theta }$. 

To this end we first verify that the improper integral (as $\sigma \to \infty$) in the definition \eqref{d-h-1} of $\mathcal{K}$ converges, namely that the limit
\begin{align}\label{eq-f-int}
\int_{1}^{\infty} \sum_{k=0}^{M-1} \frac{\ee^{2a\sigma}}{(1-\ee^{(a+i)\sigma} \ee^{i\Delta_{km}})^{2}}\,\d\sigma \coloneqq  \lim_{R\to\infty} \int_{1}^{R} \sum_{k=0}^{M-1} \frac{\ee^{2a\sigma}}{(1-\ee^{(a+i)\sigma} \ee^{i\Delta_{km}})^{2}}\,\d\sigma
\end{align}
exists.

To this end we note the identity
\begin{align}\label{eq-row}
\frac{\ee^{2a\sigma}}{1-\ee^{(a+i)\sigma+i\Delta_{km}}} = -\ee^{(a-i)\sigma-i\Delta_{km}} -\ee^{-2i\sigma-2i\Delta_{km}} + \frac{\ee^{-2i\sigma-2i\Delta_{km}}}{1-\ee^{(a+i)\sigma+i\Delta_{km}}}
\end{align}
(which was observed by \cite[(3.12)]{EL}). On the one hand, dividing \eqref{eq-row} by $1-\ee^{(a+i)\sigma+i\Delta_{km}}$, we obtain
\begin{align}\label{eq-row3}
\frac{\ee^{2a\sigma}}{(1-\ee^{(a+i)\sigma+i\Delta_{km}})^{2}} = -\frac{\ee^{(a-i)\sigma-i\Delta_{km}}}{{1-\ee^{(a+i)\sigma+i\Delta_{km}}}} - \frac{\ee^{-2i\sigma-2i\Delta_{km}}}{{1-\ee^{(a+i)\sigma+i\Delta_{km}}}} + \frac{\ee^{-2i\sigma-2i\Delta_{km}}}{(1-\ee^{(a+i)\sigma+i\Delta_{km}})^{2}}.
\end{align}
On the other hand, multiplying \eqref{eq-row} by $\ee^{-(a+i)\sigma - i\Delta_{km}}$ gives
\begin{align}\label{eq-row2}
\frac{\ee^{(a-i)\sigma-i\Delta_{km}}}{1-\ee^{(a+i)\sigma+i\Delta_{km}}} = -\ee^{-2i\sigma-2i\Delta_{km}} -\ee^{-a\sigma-3i\sigma-3i\Delta_{km}} + \frac{\ee^{-a\sigma -3i\sigma-3i\Delta_{km}}}{1-\ee^{(a+i)\sigma+i\Delta_{km}}}.
\end{align}
Combining \eqref{eq-row3} and \eqref{eq-row2} gives
\begin{equation}
\begin{aligned}\label{e-c-1}
\frac{\ee^{2a\sigma}}{(1-\ee^{(a+i)\sigma} \ee^{i\Delta_{km}})^{2}} & = \ee^{-2i\sigma-2i\Delta_{km}} +\ee^{-a\sigma-3i\sigma-3i\Delta_{km}} - \frac{\ee^{-a\sigma -3i\sigma-3i\Delta_{km}}}{1-\ee^{(a+i)\sigma+i\Delta_{km}}} \\
&\qquad - \frac{\ee^{-2i\sigma-2i\Delta_{km}}}{{1-\ee^{(a+i)\sigma+i\Delta_{km}}}} + \frac{\ee^{-2i\sigma-2i\Delta_{km}}}{(1-\ee^{(a+i)\sigma+i\Delta_{km}})^{2}}.
\end{aligned}
\end{equation}
Since $\sum_{k=0}^{M-1} \ee^{-2i\sigma-2i\Delta_{km}}=0$ (recall that \eqref{alex} holds and $M\ge 3$), the equation \eqref{e-c-1} implies that the limit \eqref{eq-f-int} exists, as required.

In order to prove the claim of this section, we show the following.
\begin{lemma}\label{lem-fp}
For any $0\le m\le M-1$,
\begin{equation}
\begin{aligned}\label{eq-fp-1}
\mathrm{f.p.}\int_{-\infty}^{\infty} \sum_{k=0}^{M-1}\frac{\ee^{2a\sigma}}{(1-\ee^{(a+i)\sigma} \ee^{i\Delta_{km}})^{2}}\,\d\sigma
= \frac{\pi i }{a g} c_0 .
\end{aligned}
\end{equation}
\end{lemma}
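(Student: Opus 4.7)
The plan is to evaluate the left-hand side of \eqref{eq-fp-1} by combining the sum over $k$ into a rational expression, rescaling to reduce to classical real-axis integrals via a contour deformation, and then converting $\cot$ to $\coth$ via an algebraic identity relating $\lambda$ and $A$.

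First, I would use the partial-fractions identity
\[
\sum_{k=0}^{M-1}\frac{1}{(1-z\omega^{k})^{2}} \;=\; \frac{M^{2}}{(1-z^{M})^{2}} - \frac{M(M-1)}{1-z^{M}},\qquad \omega \coloneqq \ee^{2\pi i/M},
\]
which follows from the Taylor expansion $(1-z\omega^{k})^{-2} = \sum_{n\ge 0}(n+1)z^{n}\omega^{nk}$ together with the character sum $\sum_{k}\omega^{nk}$, which equals $M$ when $M\mid n$ and vanishes otherwise. Since $\{\ee^{i\Delta_{km}}\}_{k=0}^{M-1}$ is a permutation of $\{\omega^{k}\}_{k=0}^{M-1}$ for any fixed $m$, setting $z=\ee^{(a+i)\sigma}$ recasts the sum in \eqref{eq-fp-1} as $F_{1}(\sigma) - F_{2}(\sigma)$, where
\[
F_{1}(\sigma) \coloneqq \frac{M^{2}\ee^{2a\sigma}}{(1-\ee^{M(a+i)\sigma})^{2}},\qquad F_{2}(\sigma) \coloneqq \frac{M(M-1)\ee^{2a\sigma}}{1-\ee^{M(a+i)\sigma}};
\]
the first has a double pole at $\sigma=0$ (requiring the finite part) while the second has only a simple pole there (requiring only principal value).

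Next, I would rescale $\tau \coloneqq M(a+i)\sigma$ and set $\lambda \coloneqq 2a/(M(a+i))$, transporting the integration to the line $L \subset \C$ through $0$ with direction $a+i$. The integrand poles, all at $\tau_{n}=2\pi in$ ($n\in\Z$), lie on the imaginary axis, and the open wedge between $L$ and the real axis contains none of them. Since $\re\lambda = 2a^{2}/(M(a^{2}+1))\in(0,2/M)$ for $M\ge 3$, a direct estimate shows the integrands decay uniformly along rays in the wedge, so a Cauchy-theorem argument with small semicircular indentations around the shared pole at $\tau=0$ permits shifting the integration to the real $\tau$-axis. There I would invoke the classical formulas
\[
\mathrm{p.v.}\!\int_{-\infty}^{\infty}\!\frac{\ee^{\lambda\tau}}{1-\ee^{\tau}}\,\d\tau = \pi\cot(\pi\lambda),\qquad \mathrm{f.p.}\!\int_{-\infty}^{\infty}\!\frac{\ee^{\lambda\tau}}{(1-\ee^{\tau})^{2}}\,\d\tau = (1-\lambda)\pi\cot(\pi\lambda),
\]
valid for $0<\re\lambda<2$. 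The first follows from $\int_{0}^{\infty}x^{s-1}/(x-1)\,\d x = -\pi\cot(\pi s)$ under the substitution $x=\ee^{\tau}$, and the second then follows from the first via the identity \eqref{eq-fp-pv} applied with $\gamma(x)=x$, together with analytic continuation in $\lambda$ beyond the immediate convergence range $1<\re\lambda<2$.

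Finally, combining the pieces yields $\pi\cot(\pi\lambda)(1-M\lambda)/(a+i)$. A direct algebraic check gives $\pi A/M = -i\pi\lambda$, so that $\cot(\pi\lambda)=-i\coth(\pi A/M)$; together with $1-M\lambda = -(a-i)/(a+i)$, this simplifies to $i\pi(a-i)\coth(\pi A/M)/(a+i)^{2} = (\pi i/(ag))\,c_{0}$, as required. The main technical obstacle will be the contour deformation step: one must verify uniform integrand decay throughout the wedge (not merely along its two bounding rays), and carefully track the small-semicircle contributions around the common pole $\tau=0$, together with the analytic continuation of the classical double-pole formula down from $\re\lambda\in(1,2)$ to our regime $\re\lambda \in (0,2/M)$.
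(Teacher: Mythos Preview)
Your approach is correct and genuinely different from the paper's. The paper keeps the $k$-sum until the end: it applies the integration-by-parts identity \eqref{eq-fp-pv} (with $\gamma(\sigma)=\ee^{(a+i)\sigma}\ee^{i\Delta_{km}}$) to each term separately, converting the finite part to a principal-value integral; it then evaluates that principal-value integral by closing the contour in the lower half-plane, summing the residues at $\sigma_j=-(2\pi j+\Delta_{km})(1+ai)/(1+a^2)$ as a geometric series in $j$, and finally summing the resulting closed-form contributions over $k$ as another geometric series to produce $\coth(\pi A/M)$. Your route inverts this order: you sum over $k$ \emph{first} via the partial-fractions identity, collapsing the integrand to a single rational function of $\ee^{M(a+i)\sigma}$, and then appeal to the classical real-axis evaluations $\pi\cot(\pi\lambda)$ and $(1-\lambda)\pi\cot(\pi\lambda)$ after a rescaling and a wedge deformation. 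Your way is shorter and reuses textbook formulas; the paper's way is more hands-on but avoids the delicate point of transporting the Hadamard finite part through a complex-linear change of variable (the f.p.\ regularisation is not manifestly invariant under such a rotation, so the cleanest fix is either to analytically continue in $\lambda$ from the range where the integral is absolutely convergent---which you mention---or to convert f.p.\ to p.v.\ \emph{before} rescaling, which is essentially what the paper does). One small imprecision: when you derive the second classical formula from the first ``via \eqref{eq-fp-pv} with $\gamma(x)=x$'', the natural choice is actually $\gamma(\tau)=\ee^{\tau}$ (or, after the substitution $u=\ee^{\tau}$, $\gamma(u)=u$ on $(0,\infty)$), since the denominator is $(1-\ee^{\tau})^2$ rather than $\tau^2$; the computation then gives $(1-\lambda)\pi\cot(\pi(\lambda-1))=(1-\lambda)\pi\cot(\pi\lambda)$ directly for $1<\re\lambda<2$, and analytic continuation extends it to $0<\re\lambda<2$ as you note.
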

\begin{proof}
Given $r>0$, we first integrate by parts by using the relation \eqref{eq-fp-pv} between f.p. and p.v. with $\gamma (\sigma ) \coloneqq \ee^{(a+i)\sigma } \ee^{i\Delta_{km}}$ to obtain
\begin{equation}
\begin{aligned}\label{int-p-1}
&\mathrm{f.p.}\int_{-r}^{r} \sum_{k=0}^{M-1} \frac{\ee^{2a\sigma}}{(1-\ee^{(a+i)\sigma} \ee^{i\Delta_{km}})^{2}}\,\d\sigma \\
&\quad = \frac{1}{a+i}\,\mathrm{f.p.}\int_{-r}^{r} \sum_{k=0}^{M-1} \ee^{(a-i)\sigma}\ee^{-i\Delta_{km}}\frac{\d}{\d\sigma}\frac{1}{1-\ee^{(a+i)\sigma} \ee^{i\Delta_{km}}}\,\d\sigma \\
& \quad = \frac{1}{a+i}\sum_{k=0}^{M-1} \frac{\ee^{(a-i)r}\ee^{-i\Delta_{km}}}{1-\ee^{(a+i)r} \ee^{i\Delta_{km}}}-\frac{1}{a+i}\sum_{k=0}^{M-1} \frac{\ee^{-(a-i)r}\ee^{-i\Delta_{km}}}{1-\ee^{-(a+i)r} \ee^{i\Delta_{km}}} \\
&\qquad - \frac{a-i}{a+i}\mathrm{p.v.}\int_{-r}^{r} \sum_{k=0}^{M-1} \frac{\ee^{(a-i)\sigma}\ee^{-i\Delta_{km}}}{1-\ee^{(a+i)\sigma} \ee^{i\Delta_{km}}}\,\d\sigma.
\end{aligned}
\end{equation}
Since
\begin{align*}
\sum_{k=0}^{M-1} \frac{\ee^{-(a-i)r}\ee^{-i\Delta_{km}}}{1-\ee^{-(a+i)r} \ee^{i\Delta_{km}}} = \sum_{k=0}^{M-1} \frac{\ee^{-i\Delta_{km}}}{\ee^{(a+i)r}- \ee^{i\Delta_{km}}}   \to 0 \quad \text{as} \ \ r\to\infty,
\end{align*}
and, by \eqref{eq-row2},
\begin{align*}
\sum_{k=0}^{M-1} \frac{e^{(a-i)r-i\Delta_{km}}}{1-\ee^{(a+i)r+i\Delta_{km}}} & = -\sum_{k=0}^{M-1} \ee^{-2ir-2i\Delta_{km}} -\sum_{k=0}^{M-1} \ee^{-ar-3ir-3i\Delta_{km}} + \sum_{k=0}^{M-1} \frac{\ee^{-ar -3ir-3i\Delta_{km}}}{1-\ee^{(a+i)r+i\Delta_{km}}} \\
& = -\sum_{k=0}^{M-1} \ee^{-ar-3ir-3i\Delta_{km}} + \sum_{k=0}^{M-1} \frac{\ee^{-ar -3ir-3i\Delta_{km}}}{1-\ee^{(a+i)r+i\Delta_{km}}}\to 0 \quad \text{as} \ \ r\to \infty,
\end{align*}
we take the limit $r\to \infty$ in  \eqref{int-p-1} to obtain
\begin{equation*}
\begin{aligned}
&\mathrm{f.p.}\int_{-\infty}^{\infty} \sum_{k=0}^{M-1} \frac{\ee^{2a\sigma}}{(1-\ee^{(a+i)\sigma} \ee^{i\Delta_{km}})^{2}}\,\d\sigma
 = - \frac{a-i}{a+i}\,\mathrm{p.v.}\int_{-\infty}^{\infty} \sum_{k=0}^{M-1} \frac{\ee^{(a-i)\sigma}\ee^{-i\Delta_{km}}}{1-\ee^{(a+i)\sigma} \ee^{i\Delta_{km}}}\,\d\sigma.
\end{aligned}
\end{equation*}
In order to compute the last principal value integral we note that the poles
\begin{equation*}
\begin{split}\sigma_{j} \coloneqq -(2\pi j+\Delta_{km})\frac{1+ai}{1+a^{2}},\quad j\in\Z
\end{split}
\end{equation*}
(note that the locations of the residua is as in Figure~\ref{fig_contours} below) of each ingredient in the last integrand in \eqref{int-p-1} are all simple, with residua (for each given $k= 0, \ldots , M-1$)
\eqnb\label{res_for_K}
\res_{\sigma =\sigma_j} \frac{\ee^{(a-i)\sigma}\ee^{-i\Delta_{km}}}{1-\ee^{(a+i)\sigma} \ee^{i\Delta_{km}}} = -\frac{\ee^{(a-i)\sigma_{j}} \ee^{-i\Delta_{km}}}{a+i} =-\frac{\ee^{2a\sigma_j}}{a+i} = -\frac{\ee^{(2\pi j + \Delta_{km})A} }{a+i},\quad j\in \Z.
\eqne
Note that, since $\re\, A = -2a/(1+a^{2})<0$ (recall \eqref{def_of_A_Bpm}), we sum all residua in the lower half plane, namely at $\sigma_0, \sigma_1, \ldots $ in the case $\Delta_{km} >0$ and $\sigma_1,\sigma_2,\ldots $ in the case $\Delta_{km}<0$. Thus, for $\Delta_{km} >0$,
\begin{align}\label{conv-1a}
 \sum_{j\ge 0}\ee^{(2\pi j + \Delta_{km}) A}
= \frac{\ee^{\Delta_{km}A}}{1-\ee^{2\pi A}}= \frac{-\ee^{\Delta_{km}A}}{2\sinh(\pi A)} \ee^{-\pi A},
\end{align}
and for $\Delta_{km}<0$ we have
\begin{align}\label{conv-1b}
\sum_{j\ge 1}\ee^{(2\pi j + \Delta_{km}) A}
= \frac{\ee^{\Delta_{km}A}\ee^{2\pi A}}{1-\ee^{2\pi A}} =  \frac{-\ee^{\Delta_{km}A}}{2\sinh(\pi A)} \ee^{\pi A}.
\end{align}
We set
\begin{align*}
d_j \coloneqq \left| \frac{\sigma_j + \sigma_{j+1}}2 \right|,\quad j\in\Z
\end{align*}
and we consider the the sequence of contours
\begin{equation*}
\Gamma_{j} \coloneqq \{d_{j} \ee^{i\varphi} \ | \ \varphi\in [\pi,2\pi]\} \quad \text{for} \ \ j\ge 1,
\end{equation*}
see Figure~\ref{fig_contours} below. Since
\begin{equation}\label{conv-gam}
\int_{\Gamma_{j}}\sum_{k=0}^{M-1} \frac{\ee^{(a-i)\sigma}\ee^{-i\Delta_{km}}}{1-\ee^{(a+i)\sigma} \ee^{i\Delta_{km}}}\,\d\sigma\to 0,\quad \text{ as }j\to\infty,
\end{equation}
 which can be calculated directly (see the proof of \cite[Section 7]{cko}, for example), the residue theorem, \eqref{res_for_K}, \eqref{conv-1a}, \eqref{conv-1b} and \eqref{conv-gam} give
\begin{align}\label{eq-pv-1}
\mathrm{p.v.}\int_{-\infty}^{\infty} \sum_{k=0}^{M-1} \frac{\ee^{(a-i)\sigma}\ee^{-i\Delta_{km}}}{1-\ee^{(a+i)\sigma} \ee^{i\Delta_{km}}}\,\d\sigma  = -\frac{1}{a+i} \frac{\pi i}{\sinh(\pi A)}\sum_{k=0}^{M-1}\mathcal{A}_{mk},
\end{align}
where we define
\begin{equation*}
\mathcal{A}_{mk}\coloneqq \ee^{\Delta_{km}A}
\left\{\begin{aligned}
& \ee^{-\pi A}, && \quad k>m, \\
& \cosh(\pi A), && \quad k=m, \\
& \ee^{\pi A},  && \quad k<m.
\end{aligned}\right.
\end{equation*}
Observe that in the above definition, we obtained $\cosh (\pi A)$ from the fact that the contour of integration passes through the simple pole $\sigma_0$ (and so gives the average of the contour integrals for contours avoiding $\sigma_0$ from either side). Combining \eqref{int-p-1} with \eqref{eq-pv-1} gives
\begin{align*}
\mathrm{f.p.}\int_{-\infty}^{\infty} \sum_{k=0}^{M-1}\frac{\ee^{2a\sigma}}{(1-\ee^{(a+i)\sigma} \ee^{i\Delta_{km}})^{2}}\,\d\sigma
= \frac{a-i}{(a+i)^{2}}\frac{\pi i}{\sinh(\pi A)}\sum_{k=0}^{M-1}\mathcal{A}_{k} = \pi i\frac{a-i}{(a+i)^{2}}\coth(\pi A/M),
\end{align*}
where in the last equality, we summed the finite geometric series (as in \cite[Appendix]{cko}). This together with \eqref{int-p-1} gives \eqref{eq-fp-1}, as required.
\end{proof}

\subsection{Operator $\JJ$}\label{sec_H2}
Here we study the properties of the operator $\JJ$, given by \eqref{d-h-2}, that is,
\[
\JJ_{m}\zeta  \coloneqq \frac{ag}{\pi i} \,\mathrm{f.p.}\int_{-\infty}^{\infty} \sum_{k=0}^{M-1} \frac{\zeta_{k}(\sigma+\theta+\Delta_{km})\ee^{2a\sigma}\ee^{-2i\theta}}{(1-\ee^{(a+i)\sigma} \ee^{i\Delta_{km}})^{2}}\,\d\sigma
\]
for $0\le m\le M-1$, where $\zeta\in L^{\infty}(\R,\C^{M})$.

As mentioned above, the main result of this section is the following.
\begin{proposition}[Structure of $\JJ_m\zeta $]\label{prop_str_J}
If $c_{mk}^\pm (\alpha )$ are defined by \eqref{def_of_cmk} and $X_k$, $Y_k$ ($k=0,\ldots , M-1$) satisfy the perturbation compatibility conditions \eqref{cond-1}--\eqref{cond-2} then
\[
\JJ_m \zeta\,(t,\theta) = \zeta^{+}_{m}(t,\theta)\ee^{-i\theta}g \sum_{k=0}^{M-1} X_{k}(t) c_{mk}^{+}(\alpha) + \zeta^{-}_{m}(t,\theta)\ee^{-i\theta}g\sum_{k=0}^{M-1} Y_{k}(t) c_{mk}^{-}(\alpha),
\]
where $\zeta$ is the perturbation of the form \eqref{intro_form-zeta}, namely,
\[
\zeta_{m}(t,\theta) = X_{m}(t) \zeta^{+}_{m}(t,\theta)\ee^{i\theta}+Y_{m}(t)\zeta^{-}_{m}(t,\theta)\ee^{i\theta}, 
\]
where
\[
\zeta_{m}^{\pm}(t,\theta) =  \ee^{\pm i\alpha\ln\frac{\Gamma_{m}(t,\theta)}{g}} = \ee^{\pm i\alpha(2a(\theta-\theta_{m})+ (2\mu -1)\ln t)}
\]
for $t>0$, $\theta\in\R$ and $m=0,\ldots , M-1$.
\end{proposition}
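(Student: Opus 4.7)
\textbf{Proof plan for Proposition~\ref{prop_str_J}.}
The plan is to mimic the structure of the proof of Lemma~\ref{lem-fp}, but accounting for the oscillatory factors $\ee^{\pm 2ia\alpha\sigma}$ introduced by the ansatz and using the compatibility conditions to restore convergence of the improper integral.

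First I would substitute the ansatz \eqref{intro_form-zeta} into \eqref{d-h-2}. The key identity is that, since $\theta+\Delta_{km}-\theta_k=\theta-\theta_m$, one has $\zeta_k^{\pm}(t,\sigma+\theta+\Delta_{km})=\ee^{\pm 2ia\alpha\sigma}\zeta_m^{\pm}(t,\theta)$, so the $\theta$-dependence in $\zeta_m^\pm(t,\theta)\ee^{-i\theta}$ factors out of the integral. This reduces the claim to showing that, for $\epsilon\in\{+,-\}$ and $(X_k^{+},X_k^{-})\coloneqq (X_k,Y_k)$,
\begin{equation*}
\frac{a}{\pi i}\sum_{k=0}^{M-1} X_k^{\epsilon}\,\ee^{i\Delta_{km}}\,\mathrm{f.p.}\!\int_{-\infty}^{\infty}\!\frac{\ee^{(2a+i+2i\epsilon a\alpha)\sigma}\,\d\sigma}{(1-\ee^{(a+i)\sigma}\ee^{i\Delta_{km}})^{2}}=\sum_{k=0}^{M-1} X_k^{\epsilon}\,c_{mk}^{\epsilon}(\alpha).
\end{equation*}

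Next I would establish convergence of the sum-integral. Expanding $(1-y)^{-2}=\sum_{n\geq 2}(n-1)y^{-n}$ with $y=\ee^{(a+i)\sigma}\ee^{i\Delta_{km}}$ shows that the integrand has, as $\sigma\to+\infty$, a unique non-decaying ($n=2$) contribution proportional to $\ee^{(-i\pm 2ia\alpha)\sigma}\ee^{-2i\Delta_{km}}$, while the tail $n\geq 3$ decays like $\ee^{-a\sigma}$. After summing against $X_k^{\epsilon}\ee^{i\Delta_{km}}$, the problematic term collapses to $\ee^{2i\theta_m}\sum_k X_k^{\epsilon}\ee^{-2i\theta_k}$, which vanishes by the second compatibility condition \eqref{cond-2}. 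As $\sigma\to -\infty$, the integrand decays because $\re(2a+i\pm 2ia\alpha)=2a>0$. At $\sigma=0$ only the $k=m$ summand is singular, which is handled by f.p.

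I would then integrate by parts exactly as in \eqref{int-p-1}, using the identity
\begin{equation*}
\frac{\ee^{(2a+i\pm 2ia\alpha)\sigma}}{(1-\ee^{(a+i)\sigma}\ee^{i\Delta_{km}})^{2}}=\frac{\ee^{a(1\pm 2i\alpha)\sigma}\ee^{-i\Delta_{km}}}{a+i}\frac{\d}{\d\sigma}\frac{1}{1-\ee^{(a+i)\sigma}\ee^{i\Delta_{km}}}
\end{equation*}
together with \eqref{eq-fp-pv} applied to $\gamma(\sigma)=\ee^{(a+i)\sigma}\ee^{i\Delta_{km}}$. The boundary term at $-\infty$ is zero because $\re a(1\pm 2i\alpha)=a>0$, while the boundary term at $+\infty$ is oscillatory of the form $-\ee^{(-i\pm 2ia\alpha)\sigma}\ee^{-i\Delta_{km}}/(a+i)$, which vanishes after summation against $X_k^{\epsilon}\ee^{i\Delta_{km}}$ by the \emph{first} compatibility condition \eqref{cond-1}. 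What remains is a principal value integral of $\ee^{a(1\pm 2i\alpha)\sigma}/(1-\ee^{(a+i)\sigma}\ee^{i\Delta_{km}})$, weighted by $-a(1\pm 2i\alpha)/(a+i)$.

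Finally, I would compute the p.v. integral by residues, in close analogy to \eqref{eq-pv-1}. The poles are again at $\sigma_j=-(2\pi j+\Delta_{km})(1+ai)/(1+a^2)$, and a direct identity gives $a(1\pm 2i\alpha)\sigma_j=(2\pi j+\Delta_{km})B_{\pm}$, so each residue is $-\ee^{(2\pi j+\Delta_{km})B_{\pm}}\ee^{-i\Delta_{km}}/(a+i)$. Depending on whether $\re B_{\pm}$ is negative or positive (which is governed by $\alpha\lessgtr 1/(2a)$, hence the exclusion $\alpha\neq 1/(2a)$ in the main theorem), I close the semicircular contours $\Gamma_j$ in the lower or upper half-plane and sum the geometric series. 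This yields precisely the weight $\mathcal{B}_{mk}^{\pm}/\sinh(\pi B_{\pm})$, with the three cases $\Delta_{km}>0$, $\Delta_{km}<0$, $\Delta_{km}=0$ producing the $\ee^{\mp\pi B_{\pm}}$, $\ee^{\pm\pi B_{\pm}}$ and $\cosh(\pi B_{\pm})$ prefactors of \eqref{def_Bmk}; the $\cosh$ arises for $k=m$ because the contour passes through the pole $\sigma_0=0$, so the principal value averages the two one-sided residue contributions. Collecting the global factor $\tfrac{a}{\pi i}\cdot\tfrac{-a(1\pm 2i\alpha)}{a+i}\cdot\tfrac{-\pi i}{(a+i)\sinh(\pi B_{\pm})}\cdot\ee^{i\Delta_{km}}\ee^{-i\Delta_{km}}$ reproduces $c_{mk}^{\pm}(\alpha)$ as defined in \eqref{def_of_cmk}.

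I expect the main obstacle to be Step~4, the residue computation: although the general shape of the argument is the same as in Lemma~\ref{lem-fp}, one must carefully track the sign of $\re B_{\pm}$ (which flips at $\alpha=1/(2a)$ and hence determines the closing half-plane and the direction of the geometric series), verify the decay of the integrals along $\Gamma_j$ in the presence of the extra $\ee^{\pm 2ia\alpha\sigma}$ oscillation, and treat the $k=m$ contour-through-pole case so that the $\cosh(\pi B_{\pm})$ factor in $\mathcal{B}_{mm}^{\pm}$ arises naturally. A secondary care point is justifying the interchange of the sum and the f.p.\ integral, which I would do by first regularizing on $[-r,r]$ with an $\varepsilon$-excision around $\sigma=0$, performing all algebraic manipulations termwise, and only then passing to the limits using the absolute integrability obtained in Step~2.
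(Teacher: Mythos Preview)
Your plan is essentially the paper's own proof (Sections~3.2.1--3.2.3): substitute the ansatz to factor out $\zeta_m^\pm\ee^{-i\theta}$, convert the f.p.\ integral to a p.v.\ integral via \eqref{eq-fp-pv}, and evaluate the latter by residues over semicircles closed in whichever half-plane makes $\sum_j\ee^{2\pi j B_\pm}$ converge.

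One bookkeeping point to correct: in Step~2, your leading term $\ee^{-2i\Delta_{km}}$ multiplied by the prefactor $X_k^\epsilon\ee^{i\Delta_{km}}$ gives $X_k^\epsilon\ee^{-i\Delta_{km}}$, so it is \eqref{cond-1}, not \eqref{cond-2}, that kills it (and likewise for the boundary term in Step~3). The second compatibility condition \eqref{cond-2} is actually needed in Step~4, and this is more than cosmetic: the raw integrand $\ee^{a(1\pm 2i\alpha)\sigma}/(1-\ee^{(a+i)\sigma+i\Delta_{km}})$ is merely $O(1)$ on the portion of $\Gamma_j$ near the positive real axis, so the semicircle integral does not vanish term-by-term as $j\to\infty$. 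The paper's device (see \eqref{eq-22bb}, \eqref{eq-ab-11}) is to expand the integrand once more via \eqref{eq-row}, producing two non-decaying pieces carrying $\ee^{-i\Delta_{km}}$ and $\ee^{-2i\Delta_{km}}$, which are annihilated after summation in $k$ by \eqref{cond-1} and \eqref{cond-2} respectively; only then does the remainder acquire the extra $\ee^{-a\sigma}$ needed for decay on that arc. So both compatibility conditions enter the contour estimate, and your anticipated ``main obstacle'' is exactly right---just be aware that overcoming it requires this cancellation structure, not merely sharper estimation of the same integrand.
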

In the proof we first show (in Section~\ref{sec_fp_to_pv}) that $\JJ$ can be transformed into the following form
\eqnb\label{fp_to_pv} \begin{split}
\JJ_m \zeta &= -\zeta^{+}_{m}(t,\theta)\ee^{-i\theta} \frac{a^{2}g}{\pi i}\frac{2i\alpha + 1}{a+i}\,\mathrm{p.v.}\int_{-\infty}^{\infty} \sum_{k=0}^{M-1} X_{k}(t) \frac{\ee^{(2i\alpha+1)a\sigma}}{1-\ee^{(a+i)\sigma} \ee^{i\Delta_{km}}}\,\d\sigma \\
& \quad - \zeta^{-}_{m}(t,\theta)\ee^{-i\theta} \frac{a^{2}g}{\pi i}\frac{-2i\alpha + 1}{a+i}\,\mathrm{p.v.}\int_{-\infty}^{\infty} \sum_{k=0}^{M-1} Y_{k}(t)\frac{\ee^{(-2i\alpha+1)a\sigma}}{1-\ee^{(a+i)\sigma} \ee^{i\Delta_{km}}}\,\d\sigma
\end{split}
\eqne
for $0\le m \le M-1$.

In Section~\ref{sec_res_thm} below, we will then use the Residue Theorem to deduce Proposition~\ref{prop_str_J} from \eqref{fp_to_pv}, namely that
\begin{align}
&\mathrm{p.v.}\int_{-\infty}^{\infty}  \sum_{k=0}^{M-1} X_{k}(t)  \frac{\ee^{( 2i\alpha+1)a\sigma}}{1-\ee^{(a+i)\sigma} \ee^{i\Delta_{km}}}\,\d\sigma  =  -\frac{\pi i}{a^{2}} \frac{a+i}{2i\alpha + 1} \sum_{k=0}^{M-1} X_{k}(t) c_{mk}^+, \label{cmk_relation1} \\[3pt]
&\mathrm{p.v.}\int_{-\infty}^{\infty}  \sum_{k=0}^{M-1} Y_{k}(t)  \frac{\ee^{(- 2i\alpha+1)a\sigma}}{1-\ee^{(a+i)\sigma} \ee^{i\Delta_{km}}}\,\d\sigma  =  -\frac{\pi i}{a^{2}} \frac{a+i}{-2i\alpha + 1} \sum_{k=0}^{M-1} Y_{k}(t) c_{mk}^-,  \label{cmk_relation2}
\end{align}
given compatibility conditions \eqref{cond-1}--\eqref{cond-2} hold. (Recall $c_{mk}^\pm$'s are defined in \eqref{def_of_cmk}.)
In order to obtain \eqref{cmk_relation1}--\eqref{cmk_relation2}  we will use contour integration over both half circles in $\{ \im >0 \}$ and $\{ \im <0 \} \subset \C$, see Figure~\ref{fig_contours}.
\begin{center}
\includegraphics[width=0.5\textwidth]{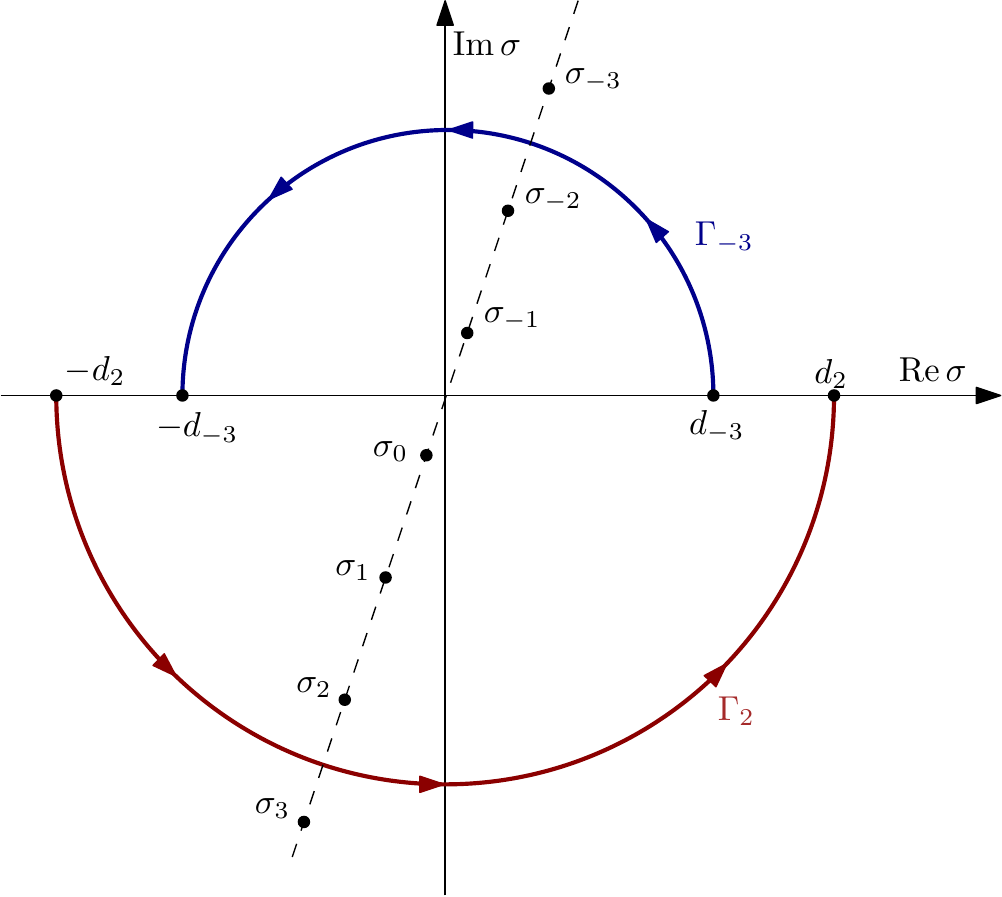}
\captionof{figure}{A sketch of the contour integration in \eqref{cmk_relation1}--\eqref{cmk_relation2}.}\label{fig_contours}
\end{center}
In order to obtain \eqref{cmk_relation1}--\eqref{cmk_relation2}, we first observe the following, provided the perturbation compatibility conditions \eqref{cond-1}--\eqref{cond-2} hold.\\

\noindent\emph{Case 1.} \eqref{cmk_relation1}, $\alpha \in (0,1/2a)$ - we show that
\begin{align}\label{eq-lim-p1}
\lim_{j\to\infty}\int_{\Gamma_{j}} \sum_{k=0}^{M-1} X_{k}(t) \frac{\ee^{(2i\alpha+1)a\sigma}}{1-\ee^{(a+i)\sigma} \ee^{i\Delta_{km}}}\,\d\sigma = 0.
\end{align}
\noindent\emph{Case 2.} \eqref{cmk_relation1}, $\alpha \in (1/2a,\infty )$ - we show that
\begin{align}\label{eq-lim-p2}
\lim_{j\to-\infty}\int_{\Gamma_{j}} \sum_{k=0}^{M-1} X_{k}(t) \frac{\ee^{(2i\alpha+1)a\sigma}}{1-\ee^{(a+i)\sigma} \ee^{i\Delta_{km}}}\,\d\sigma = 0.
\end{align}
\noindent\emph{Case 3.} \eqref{cmk_relation2}, $\alpha \in (0,\infty )$ - we show that
\begin{align}\label{eq-lim-p3}
\lim_{j\to\infty}\int_{\Gamma_{j}} \sum_{k=0}^{M-1} Y_{k}(t) \frac{\ee^{(-2i\alpha+1)a\sigma}}{1-\ee^{(a+i)\sigma} \ee^{i\Delta_{km}}}\,\d\sigma = 0.
\end{align}
see Section~\ref{sec_contours} for details, where $\Gamma_{j}$ denotes the upper of lower semicircle, depending on the sign of $j$, of radius $d_j$ which is oriented counterclockwise, that is
\[
\Gamma_j \coloneqq \begin{cases} \{d_j \ee^{i\varphi } \ | \ \varphi \in [0,\pi] \},\quad &j\le -1,\\
\{d_j \ee^{i\varphi } \ | \ \varphi \in [\pi , 2\pi ] \},\quad &j\geq 0 .
\end{cases}
\]
We note that the choice between the two limits $j\to \pm \infty $ in \eqref{eq-lim-p1}--\eqref{eq-lim-p3} is made based on the summability of the residua at one half-plane or the other ($\{ \im>0 \}$ or $\{ \im <0 \}$). For example, for \eqref{eq-lim-p2}, we have
\[
\ee^{(2i\alpha +1) a\sigma_j }=\ee^{(2\pi j + \Delta_{km} ) B_+}
\]
(which is analogous to the observation \eqref{res_for_K} above, where we obtained $A$ instead of $B_+$), and so, since $\re \,B_+=-a(1-2a\alpha )/(1+a^2)>0$ for $\alpha >1/2a$, the sum of the residua at $\sigma_j$'s will only be summable as $j\to -\infty$, which is also, heuristically speaking, the reason the contour integral in \eqref{eq-lim-p2} converges (which we verify in detail in Section~\ref{sec_contours}).

One can then use the Residue Theorem with respect to either the top or the bottom half-disk (see Figure~\ref{fig_contours}), depending on the case above, to obtain \eqref{cmk_relation1}--\eqref{cmk_relation2}, which we describe in detail in Section~\ref{sec_res_thm} below.

\subsubsection{Finite part to principal value}\label{sec_fp_to_pv}
In this section we show \eqref{fp_to_pv}. To this end, similarly as in the case of $\KK_m \zeta$ (recall \eqref{eq-f-int}), we first observe that the improper integral in the definition \eqref{d-h-2} of $\JJ_m \zeta$ converges. Indeed, since the map $\zeta$ is bounded for $\theta\in\R$, it suffices to verify  convergence of the improper integral
\begin{align}\label{conv-int-1}
\int_{1}^{\infty} \sum_{k=0}^{M-1} \frac{\zeta_{k}(\sigma+\theta+\Delta_{km})\ee^{2a\sigma}\ee^{-2i\theta}}{(1-\ee^{(a+i)\sigma} \ee^{i\Delta_{km}})^{2}}\,\d\sigma,
\end{align}
which in turn, by \eqref{e-c-1}, holds provided
\begin{align}\label{conv-int-2}
I(\sigma)\coloneqq \sum_{k=0}^{M-1} \zeta_{k}(\sigma+\theta+\Delta_{km})\ee^{-2i\sigma-2i\Delta_{km}} \ee^{-2i\theta} = 0
\end{align}
for $\sigma\in(1,\infty)$. To prove \eqref{conv-int-2}, we recall the definition \eqref{intro_form-zeta} of $\zeta$ and observe that
\begin{equation}
\begin{aligned}\label{eq-a-1}
 \zeta^{\pm}_{k}(t,\sigma+\theta+\Delta_{km})\ee^{i(\sigma+\theta+\Delta_{km})} = \ee^{\pm 2ia\alpha\sigma}  \ee^{i(\sigma+\Delta_{km})} \zeta^{\pm}_{m}(t,\theta)\ee^{i\theta},
\end{aligned}
\end{equation}
which shows that
\begin{align*}
I(\sigma) = \ee^{(2a\alpha - 1)i\sigma}\ee^{-i\theta}\zeta^{+}_{m}(t,\theta)\sum_{k=0}^{M-1} X_{k}(t)\ee^{-i\Delta_{km}}
 + \ee^{-(2a\alpha+1)i\sigma}\ee^{-i\theta}\zeta^{-}_{m}(t,\theta)\sum_{k=0}^{M-1} Y_{k}(t)\ee^{-i\Delta_{km}}=0,
\end{align*}
due to the perturbation compatibility condition \eqref{cond-1}, as desired.

Due to \eqref{d-h-2} and the relation \eqref{eq-fp-pv} between f.p. and p.v, for any $r>0$, we obtain
\begin{equation}
\begin{aligned}\label{eq-part-diff-1}
\mathcal{J}_{m}\zeta  
& = \frac{ag}{\pi i}\frac{1}{a+i}\mathrm{f.p.}\int_{-r}^{r} \sum_{k=0}^{M-1} \zeta_{k}(\sigma+\theta+\Delta_{km})\ee^{(a-i)\sigma}\ee^{-i\Delta_{km}}\ee^{-2i\theta}\frac{\d}{\d\sigma}\frac{1}{1-\ee^{(a+i)\sigma} \ee^{i\Delta_{km}}}\,\d\sigma \\
& \quad = -\frac{ag}{\pi i}\frac{1}{a+i}\mathrm{p.v.}\int_{-r}^{r} \sum_{k=0}^{M-1} \frac{\partial_{\sigma}\zeta_{k}(\sigma+\theta+\Delta_{km})\ee^{(a-i)\sigma}\ee^{-i\Delta_{km}}\ee^{-2i\theta}}{1-\ee^{(a+i)\sigma} \ee^{i\Delta_{km}}}\,\d\sigma \\
&\qquad - \frac{ag}{\pi i}\frac{a-i}{a+i}\mathrm{p.v.}\int_{-r}^{r} \sum_{k=0}^{M-1} \frac{\zeta_{k}(\sigma+\theta+\Delta_{km})\ee^{(a-i)\sigma}\ee^{-i\Delta_{km}}\ee^{-2i\theta}}{1-\ee^{(a+i)\sigma} \ee^{i\Delta_{km}}}\,\d\sigma \\
&\qquad +\frac{ag}{\pi i}\frac{1}{a+i}\sum_{k=0}^{M-1}\frac{\zeta_{k}(r+\theta+\Delta_{km})\ee^{(a-i)r}\ee^{-i\Delta_{km}}\ee^{-2i\theta}}{1-\ee^{(a+i)r} \ee^{i\Delta_{km}}} \\
&\qquad - \frac{ag}{\pi i}\frac{1}{a+i}\sum_{k=0}^{M-1} \frac{\zeta_{k}(-r+\theta+\Delta_{km})\ee^{-(a-i)r}\ee^{-i\Delta_{km}}\ee^{-2i\theta}}{1-\ee^{-(a+i)r} \ee^{i\Delta_{km}}}.
\end{aligned}
\end{equation}
Since $\zeta_{k}$ is a bounded function, it is not difficult to check that
\begin{align*}
\sum_{k=0}^{M-1} \frac{\zeta_{k}(-r+\theta+\Delta_{km})\ee^{-(a-i)r}\ee^{-i\Delta_{km}}\ee^{-2i\theta}}{1-\ee^{-(a+i)r} \ee^{i\Delta_{km}}} \to 0 \quad \text{as} \ \ r\to\infty.
\end{align*}
On the other hand, by the equality \eqref{eq-row2} and \eqref{conv-int-2}, we have
\begin{align*}
&\sum_{k=0}^{M-1}\frac{\zeta_{k}(r+\theta+\Delta_{km})\ee^{(a-i)r}\ee^{-i\Delta_{km}}\ee^{-2i\theta}}{1-\ee^{(a+i)r} \ee^{i\Delta_{km}}} = -\sum_{k=0}^{M-1}\zeta_{k}(r+\theta+\Delta_{km})\ee^{-2i\sigma-2i\Delta_{km}}\ee^{-2i\theta} \\
& \qquad + \sum_{k=0}^{M-1}\zeta_{k}(r+\theta+\Delta_{km})\ee^{-2i\theta}\left(-\ee^{-a\sigma-3i\sigma-3i\Delta_{km}} + \frac{\ee^{-a\sigma -3i\sigma-3i\Delta_{km}}}{1-e^{(a+i)\sigma+i\Delta_{km}}}\right)\\
&\quad = \sum_{k=0}^{M-1}\zeta_{k}(r+\theta+\Delta_{km})\ee^{-2i\theta}\left(-\ee^{-a\sigma-3i\sigma-3i\Delta_{km}} + \frac{\ee^{-a\sigma -3i\sigma-3i\Delta_{km}}}{1-\ee^{(a+i)\sigma+i\Delta_{km}}}\right)\to 0 \ \  \text{as} \ r\to\infty.
\end{align*}
Therefore, passing in \eqref{eq-part-diff-1} with $r\to\infty$, we have
\begin{equation}
\begin{aligned}\label{eq-part-diff}
\mathcal{J}_{m}\zeta & = -\frac{ag}{\pi i}\frac{1}{a+i}\mathrm{p.v.}\int_{-\infty}^{\infty} \sum_{k=0}^{M-1} \frac{\partial_{\sigma}\zeta_{k}(\sigma+\theta+\Delta_{km})\ee^{(a-i)\sigma}\ee^{-2i\theta}\ee^{-i\Delta_{km}}}{1-\ee^{(a+i)\sigma} \ee^{i\Delta_{km}}}\,\d\sigma \\
&\quad - \frac{ag}{\pi i}\frac{a-i}{a+i}\mathrm{p.v.}\int_{-\infty}^{\infty} \sum_{k=0}^{M-1} \frac{\zeta_{k}(\sigma+\theta+\Delta_{km})\ee^{(a-i)\sigma}\ee^{-2i\theta}\ee^{-i\Delta_{km}}}{1-\ee^{(a+i)\sigma} \ee^{i\Delta_{km}}}\,\d\sigma.
\end{aligned}
\end{equation}
In order to find $\p_\sigma \zeta_k$ we recall the definition \eqref{intro_eq-a-2} to note that
\begin{equation}
\begin{aligned}\label{eq-a-3}
& \partial_{\sigma}\left( \zeta^{\pm}_{k}(t,\sigma+\theta+\Delta_{km}) \ee^{i (\sigma+\theta+\Delta_{km})}\right) \\
& \qquad = \ee^{i(\sigma + \Delta_{km}+\theta)} (\pm2ia\alpha \ee^{\pm2ia\alpha\sigma} \zeta^{\pm}_{m}(t,\theta)+i\ee^{\pm2ia\alpha\sigma} \zeta^{\pm}_{m}(t,\theta))  \\
& \qquad = i \ee^{i(\sigma + \Delta_{km}+\theta)} \ee^{\pm2ia\alpha\sigma} (\pm2a\alpha +1 )\zeta^{\pm}_{m}(t,\theta).
\end{aligned}
\end{equation}
Applying \eqref{eq-a-3} and \eqref{eq-a-1} in \eqref{eq-part-diff} gives
\begin{align*}
\JJ_m \zeta&= -\frac{a^{2}g}{\pi i}\frac{2i\alpha + 1}{a+i}\,\mathrm{p.v.}\int_{-\infty}^{\infty} \sum_{k=0}^{M-1} X_{k}(t)\ee^{2ia\alpha\sigma}\frac{\zeta^{+}_{m}(t,\theta)\ee^{a\sigma}\ee^{-i\theta}}{1-\ee^{(a+i)\sigma} \ee^{i\Delta_{km}}}\,\d\sigma \\
& \qquad - \frac{a^{2}g}{\pi i}\frac{-2i\alpha + 1}{a+i}\,\mathrm{p.v.}\int_{-\infty}^{\infty} \sum_{k=0}^{M-1} Y_{k}(t) \ee^{-2ia\alpha\sigma}\frac{  \zeta^{-}_{m}(t,\theta)\ee^{a\sigma}\ee^{-i\theta}}{1-\ee^{(a+i)\sigma} \ee^{i\Delta_{km}}}\,\d\sigma,
\end{align*}
as required.

\subsubsection{Contour integrals}\label{sec_contours}
Here we assume the perturbation compatibility conditions \eqref{cond-1}--\eqref{cond-2} and we verify the convergence claims \eqref{eq-lim-p1}--\eqref{eq-lim-p3} of integrals along  semicircles $\Gamma_j$.\\

\begin{lemma}\label{lem-c}
There is a positive constant $c>0$ and $j_{0}\ge 1$ such that, for any $0\le k,m \le M-1$ and $|j|\ge j_{0}$, we have
\begin{align*}
\left|\ee^{(a+i)\sigma + i\Delta_{km}} - 1\right|\ge c,\quad \sigma\in\Gamma_{j}.
\end{align*}
\end{lemma}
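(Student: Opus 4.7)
I would prove this by decoupling the bound into two complementary regimes on $\Gamma_j$, based on the proximity of $\sigma$ to the pole line $\ell := \R(1+ai)$ on which all the poles $\sigma_j^{(k,m)} = -(2\pi j + \Delta_{km})(1+ai)/(1+a^2)$ lie. Writing $w := (a+i)\sigma + i\Delta_{km}$, the elementary identity
\[
|\ee^w - 1|^2 = (\ee^{\re w} - 1)^2 + 4\ee^{\re w}\sin^2\bigl(\tfrac{1}{2}\im w\bigr)
\]
makes explicit that $|\ee^w - 1|$ is small only when $\re w$ and $\sin(\tfrac{1}{2}\im w)$ are \emph{simultaneously} small, so it suffices to rule out these two failure modes separately.

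Parametrising $\sigma = d_j\ee^{i\varphi}$ and using the identity $(a+i)(1+ai) = i(1+a^2)$, one computes
\[
\re w = -d_j\sqrt{1+a^2}\,\sin(\varphi - \varphi^*),\qquad \im w = d_j\sqrt{1+a^2}\,\cos(\varphi - \varphi^*) + \Delta_{km},
\]
with $\varphi^* := \arctan a$. When $|\re w| \ge 1$, the identity gives $|\ee^w - 1| \ge |\ee^{\re w} - 1| \ge 1 - \ee^{-1}$ directly. Otherwise $|\sin(\varphi - \varphi^*)| < 1/(d_j\sqrt{1+a^2})$, so for $|j|$ large $\varphi$ is confined to a shrinking neighborhood of $\varphi^*$ or $\pi + \varphi^*$, and a Taylor expansion of the cosine gives $\im w = \pm d_j\sqrt{1+a^2} + \Delta_{km} + O(1/d_j)$ (the sign depending on which semicircle $\Gamma_j$ sits on). The argument then reduces to showing that the leading term stays uniformly bounded away from $2\pi\Z$ in $k, m$.

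This uniform separation is the main obstacle and dictates the choice of $d_j$. As $(k, m)$ vary, $\{\Delta_{km} \bmod 2\pi\}$ fills out the grid $\tfrac{2\pi}{M}\{0, 1, \ldots, M-1\}$, reflecting the fact that the union of the individual pole lattices $\{\sigma_j^{(k,m)}: j\in\Z\}$ on $\ell$ amalgamates into a single refined lattice of spacing $2\pi/(M\sqrt{1+a^2})$. The correct prescription is to take $d_j\sqrt{1+a^2}$ to be an odd multiple of $\pi/M$, i.e.\ a midpoint of this refined lattice; then $\pm d_j\sqrt{1+a^2} + \Delta_{km}$ is itself an odd multiple of $\pi/M$, hence at distance exactly $\pi/M$ from every element of $2\pi\Z$. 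Choosing $j_0$ large enough that the $O(1/d_j)$ error drops below $\pi/(2M)$ yields $|\sin(\tfrac{1}{2}\im w)| \ge \sin(\pi/(4M)) > 0$ in the near-$\ell$ regime, which, combined with the far-$\ell$ regime, produces the desired constant $c > 0$. A naive choice of $d_j$ based on a single pole lattice (say $\Delta_{km} = 0$) would fail for even $M$ when $k - m \equiv M/2$, where $\Gamma_j$ would then pass through a pole; the refined midpoint choice is what resolves this uniformly across all $k, m$.
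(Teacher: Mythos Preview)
Your approach is correct and takes a different route from the paper's. The paper first shows that $\Gamma_j$ stays at distance at least $\pi/\sqrt{1+a^2}$ from the pole set $\{\sigma_l\}$, then pushes forward by $\sigma \mapsto (a+i)\sigma + i\Delta_{km}$ and invokes an external bound (\cite[Lemma~2.1]{cko}) stating $|\ee^z - 1| \ge c$ off a neighborhood of $2\pi i\Z$. You instead work directly from the identity $|\ee^w - 1|^2 = (\ee^{\re w} - 1)^2 + 4\ee^{\re w}\sin^2(\tfrac12\im w)$ with a near/far split relative to the pole line, which is more self-contained and avoids the citation.

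More importantly, your last paragraph catches something the paper's proof does not handle cleanly. The paper's radii $d_j := |(\sigma_j + \sigma_{j+1})/2|$ are defined through poles $\sigma_l$ that themselves depend on $\Delta_{km}$, yet $\Gamma_j$ must be a \emph{single} contour for the sum over $k$; if one fixes $d_j$ via the $\Delta_{km}=0$ lattice (so $d_j\sqrt{1+a^2}=(2j+1)\pi$), then for even $M$ and $k-m\equiv M/2$ the semicircle passes through a pole and the bound fails. Your prescription that $d_j\sqrt{1+a^2}$ be an odd multiple of $\pi/M$ is precisely the midpoint of the \emph{refined} lattice and resolves this uniformly. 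One minor correction: the distance from an odd multiple of $\pi/M$ to $2\pi\Z$ is \emph{at least} $\pi/M$, not always exactly $\pi/M$, but this only helps. Note that you are effectively redefining $d_j$, hence proving the lemma for a modified $\Gamma_j$; this is harmless downstream, since the residue arguments require only $d_j\to\infty$ together with the uniform bound.
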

\begin{proof}
Let us observe that, for $j,l\le -1$ and $\varphi\in[0,2\pi]$, we have
\begin{align*}
|d_{j}\ee^{i\varphi} - \sigma_{l}| \ge |d_{j} - |\sigma_{l}|| \ge \min\{d_{j}-|\sigma_{j+1}|, |\sigma_{j}|-d_{j}\} = \frac{\pi}{(1+a^{2})^{1/2}}.
\end{align*}
On the other hand, if we assume that $j,l\ge 1$ and $\varphi\in[0,2\pi]$, then
\begin{align*}
|d_{j}\ee^{i\varphi} - \sigma_{l}| \ge |d_{j} - |\sigma_{l}|| \ge \min\{d_{j}-|\sigma_{j}|, |\sigma_{j+1}|-d_{j}\} = \frac{\pi}{(1+a^{2})^{1/2}}.
\end{align*}
Hence, if we take $\ve\coloneqq \frac{\pi}{(1+a^{2})^{1/2}}$, then there is $j_{0}\ge 1$ such that for any $|j|\ge j_{0}$ and $l\in\Z$, we have
\begin{align*}
|d_{j}\ee^{i\varphi} - \sigma_{l}| \ge \ve,\quad \varphi\in[0,2\pi].
\end{align*}
From \cite[Lemma 2.1]{cko} it follows that for $\ve_{0}\coloneqq \ve(1+a^{2})^{1/2}$ there is $c>0$ such that
\begin{align}\label{eq-l-1}
|\ee^{z} - 1| \ge c,\quad z\in \C\setminus\bigcup_{l\in\Z} B(2l\pi i,\ve_{0}).
\end{align}
Then, for any $\sigma\in\Gamma_{j}$, we have
\begin{align*}
\sigma\in\Gamma_{j}\subset\left(\bigcup_{l\in\Z}B(\sigma_{l},\ve)\right)^{c} = \left(\bigcup_{l\in\Z} B\left(\frac{-i(2l\pi +\Delta_{km})}{a+i},\ve\right)\right)^{c},
\end{align*}
which gives
\begin{align*}
(a+i)\sigma + i\Delta_{km}\in \left(\bigcup_{l\in\Z} B\left(2l\pi i,\ve_{0}\right)\right)^{c}.
\end{align*}
Then the assertion of the lemma follows from the inequality \eqref{eq-l-1} and the proof is completed.
\end{proof}

In order to show \eqref{eq-lim-p1}, i.e. that
\eqnb\label{eq-lim-p1_repeat}
\lim_{j\to\infty}\int_{\Gamma_{j}} \sum_{k=0}^{M-1} X_{k}(t) \frac{\ee^{(2i\alpha+1)a\sigma}}{1-\ee^{(a+i)\sigma} \ee^{i\Delta_{km}}}\,\d\sigma = 0,
\eqne
we first set
 \begin{align*}
\gamma_{j}(\varphi)& \coloneqq  d_j (2i\alpha+1) \ee^{i\varphi} = d_j (2i\alpha+1) (\cos\varphi+i\sin\varphi) \\
& = d_{j}(-2\alpha\sin\varphi + \cos\varphi+2i\alpha\cos\varphi + i\sin\varphi)
\end{align*}
Let $\varphi_{1},\varphi_{2}\in(\pi,3\pi/2)$ be such that $\varphi_{1}<\varphi_{2}$ and
\begin{align*}
\cot\varphi_{1} = 1/a,\quad \cot\varphi_{2} = 2\alpha
\end{align*}
Let us take $\varphi_{0}\in (\varphi_{1},\varphi_{2})$ and choose $\varphi_{3}\in(\pi,\varphi_{1})$ such that
\begin{align}\label{est-i-1}
\re \,\gamma_j (\varphi ) & = d_j(-2\alpha\sin\varphi + \cos\varphi) \le -d_j/2,\quad \varphi\in [\pi,\varphi_{3}].
\end{align}
Then, for any $\varphi\in[\varphi_{3},\varphi_{0}]$, we have $-\sin\varphi \ge c_{1} > 0$ and
\begin{equation}
\begin{aligned}\label{est-i-2}
\re \,\gamma_j (\varphi ) & = d_j(-2\alpha\sin\varphi + \cos\varphi) =  -d_j\sin\varphi(2\alpha - \cot\varphi) \\
&= -d_j\sin\varphi(\cot\varphi_{2} - \cot\varphi) \le -d_j\sin\varphi(\cot\varphi_{2} - \cot\varphi_{0}) \\
&\le d_jc_{1}(\cot\varphi_{2} - \cot\varphi_{0}).
\end{aligned}
\end{equation}
Let us decompose the contour on the sum $\Gamma_{j} = \Gamma_{j,1}\cup\Gamma_{j,2}$, where
\begin{align}\label{dec-1}
\Gamma_{j,1}  \coloneqq \{d_{j} \ee^{i\varphi} \ | \ \varphi\in [\pi,\varphi_{0}]\} \quad\text{and}\quad
\Gamma_{j,2}  \coloneqq \{d_{j} \ee^{i\varphi} \ | \ \varphi\in [\varphi_{0},2\pi]\}.
\end{align}
Then, using Lemma \ref{lem-c}, we have the estimates
\begin{align*}
\left|\int_{\Gamma_{j,1}} \sum_{k=0}^{M-1} X_{k}(t) \frac{\ee^{(2i\alpha+1)a\sigma}\,\d\sigma}{1-\ee^{(a+i)\sigma} \ee^{i\Delta_{km}}} \right|
& \le \int_{\pi}^{\varphi_{0}} \sum_{k=0}^{M-1} |X_{k}(t)|\frac{\ee^{a\re\,\gamma_{j}(\varphi)}|\gamma_{j}'(\varphi)|\,\d\varphi}{|1-\ee^{(a+i)\gamma_{j}(\varphi)+i\Delta_{km}}|} \\
& \le \int_{\pi}^{\varphi_{0}} \sum_{k=0}^{M-1} |X_{k}(t)|c^{-1}d_{j}\ee^{a\re\,\gamma_{j}(\varphi)},
\end{align*}
which together with \eqref{est-i-1} and \eqref{est-i-2} implies that
\begin{align}\label{eq-lim-1}
\lim_{j\to\infty} \int_{\Gamma_{j,1}} \sum_{k=0}^{M-1} X_{k}(t)\frac{\ee^{(2i\alpha+1)a\sigma}\,\d\sigma}{1-\ee^{(a+i)\sigma} \ee^{i\Delta_{km}}} = 0.
\end{align}
Let us observe that, from \eqref{eq-row} we have
\begin{align*}
\frac{\ee^{(2i\alpha+1)a\sigma}}{1-\ee^{(a+i)\sigma+i\Delta_{km}}} = -\ee^{2ia\alpha\sigma-i\sigma-i\Delta_{km}} -\ee^{2ia\alpha\sigma-a\sigma-2i\sigma-2i\Delta_{km}} + \frac{\ee^{(2i\alpha-1)a\sigma-2i\sigma-2i\Delta_{km}}}{1-\ee^{(a+i)\sigma+i\Delta_{km}}},
\end{align*}
which together with the conditions \eqref{cond-1} and \eqref{cond-2} gives
\begin{align}\label{eq-22bb}
\int_{\Gamma_{j,2}} \sum_{k=0}^{M-1} X_{k}(t) \frac{\ee^{(2i\alpha+1)a\sigma}\,\d\sigma}{1-\ee^{(a+i)\sigma} \ee^{i\Delta_{km}}} = \int_{\Gamma_{j,2}} \sum_{k=0}^{M-1} X_{k}(t) \frac{\ee^{(2i\alpha-1)a\sigma-2i\sigma-2i\Delta_{km}}}{1-\ee^{(a+i)\sigma+i\Delta_{km}}}\,\d\sigma.
\end{align}
Let us define
\begin{equation}\label{eq-gam-1}
\begin{aligned}
\hat \gamma_{j}(\varphi)& \coloneqq  d_j (2i\alpha-1)a\ee^{i\varphi} - 2id_{j}\ee^{i\varphi} = d_j (2i\alpha-1) a(\cos\varphi+i\sin\varphi) - 2id_{j}(\cos\varphi+i\sin\varphi)\\
& = d_{j}(-2\alpha a\sin\varphi - a\cos\varphi+2ia\alpha\cos\varphi - ia\sin\varphi) - 2id_{j}\cos\varphi+2d_{j}\sin\varphi
\end{aligned}
\end{equation}
and choose $\varphi_{4} \in (3\pi/2, 2\pi)$ such that
\begin{align}\label{est-i-3}
\re \,\hat\gamma_{j}(\varphi) & = d_{j}(-2\alpha a\sin\varphi - a\cos\varphi + 2\sin\varphi) \le -d_{j}a/2,\quad \varphi\in[\varphi_{4},2\pi].
\end{align}
Let $c_{1}>0$ be such that $-\sin\varphi \ge c_{1} > 0$ for $\varphi\in[\varphi_{0},\varphi_{4}]$. Since $2a\alpha<1$, for $\varphi\in[\varphi_{0},\varphi_{4}]$ we have
\begin{equation}\label{est-i-4}
\begin{aligned}
\re \,\hat\gamma_{j}(\varphi) & = d_{j}(-2\alpha a\sin\varphi - a\cos\varphi + 2\sin\varphi) \le d_{j}(- a\cos\varphi + \sin\varphi) \\
& \le -d_{j}a\sin\varphi(\cot\varphi - 1/a) = -d_{j}a\sin\varphi(\cot\varphi - \cot\varphi_{1}) \\
&\le -d_{j}a\sin\varphi(\cot\varphi_{0} - \cot\varphi_{1}) \le d_{j}a c_1(\cot\varphi_{0} - \cot\varphi_{1}).
\end{aligned}
\end{equation}
By Lemma \ref{lem-c} and the equality \eqref{eq-22bb}, we have the estimates
\begin{align*}
&\left|\int_{\Gamma_{j,2}} \sum_{k=0}^{M-1} X_{k}(t) \frac{\ee^{(2i\alpha+1)a\sigma}\,\d\sigma}{1-\ee^{(a+i)\sigma} \ee^{i\Delta_{km}}} \right|
= \left|\int_{\Gamma_{j,2}} \sum_{k=0}^{M-1} X_{k}(t) \frac{\ee^{(2i\alpha-1)a\sigma-2i\sigma-2i\Delta_{km}}}{1-\ee^{(a+i)\sigma+i\Delta_{km}}}\,\d\sigma\right|\\
&\qquad \le \int_{\varphi_{0}}^{2\pi} \sum_{k=0}^{M-1} |X_{k}(t)|\frac{\ee^{\re\,\hat\gamma_{j}(\varphi)}|\hat\gamma_{j}'(\varphi)|\,\d\varphi}{|1-\ee^{(a+i)\hat\gamma_{j}(\varphi)+i\Delta_{km}}|} \le \int_{\varphi_{0}}^{2\pi} \sum_{k=0}^{M-1} |X_{k}(t)|c^{-1}d_{j}\ee^{\re\,\hat\gamma_{j}(\varphi)} \d \varphi,
\end{align*}
which together with \eqref{est-i-3} and \eqref{est-i-4} gives the limit
\begin{align}\label{eq-lim-2}
\lim_{j\to\infty} \int_{\Gamma_{j,2}} \sum_{k=0}^{M-1} X_{k}(t) \frac{\ee^{(2i\alpha+1)a\sigma}\,\d\sigma}{1-\ee^{(a+i)\sigma} \ee^{i\Delta_{km}}} = 0.
\end{align}
Combining \eqref{eq-lim-1} and \eqref{eq-lim-2} gives \eqref{eq-lim-p1_repeat} as desired.\\

As for \eqref{eq-lim-p2}, i.e. that
\begin{align}\label{eq-lim-p2_repeat}
\lim_{j\to-\infty}\int_{\Gamma_{j}} \sum_{k=0}^{M-1} X_{k}(t) \frac{\ee^{(2i\alpha+1)a\sigma}}{1-\ee^{(a+i)\sigma} \ee^{i\Delta_{km}}}\,\d\sigma = 0,
\end{align}
we let $\varphi_{1},\varphi_{2}\in(0,\pi/2)$ be such that $\varphi_{1}>\varphi_{2}$ and
\begin{align*}
\cot\varphi_{1} = 1/a,\qquad \cot\varphi_{2} = 2\alpha.
\end{align*}
Let us take $\varphi_{0}\in (\varphi_{2},\varphi_{1})$ and choose $\varphi_{3}\in(\pi/2,\pi)$ such that
\begin{align}\label{est-i-5}
\re \,\gamma_j (\varphi) & = d_j(-2\alpha\sin\varphi + \cos\varphi) \le -d_j/2,\quad \varphi\in[\varphi_{3},\pi].
\end{align}
Then, for any $\varphi\in[\varphi_{0},\varphi_{3}]$, we have $\sin\varphi \ge c_{1}>0$ and
\begin{equation}\label{est-i6a}
\begin{aligned}
\re \,\gamma_j (\varphi ) & = d_j(-2\alpha\sin\varphi + \cos\varphi) =  d_j\sin\varphi(\cot\varphi-2\alpha) \\
&= d_j\sin\varphi(\cot\varphi-\cot\varphi_{2}) \le d_j\sin\varphi(\cot\varphi_{0} - \cot\varphi_{2})\\
&\le d_jc_{1}(\cot\varphi_{0} - \cot\varphi_{2}).
\end{aligned}
\end{equation}
Let us decompose the contour on the sum $\Gamma_{j} = \Gamma_{j,1}\cup\Gamma_{j,2}$, where
\begin{align*}
\Gamma_{j,1}  \coloneqq \{d_{j} \ee^{i\varphi} \ | \ \varphi\in [0,\varphi_{0}]\} \quad\text{and}\quad
\Gamma_{j,2}  \coloneqq \{d_{j} \ee^{i\varphi} \ | \ \varphi\in [\varphi_{0},\pi]\}.
\end{align*}
Then, by Lemma \ref{lem-c}, we have the estimates
\begin{align*}
\left|\int_{\Gamma_{j,2}} \sum_{k=0}^{M-1} X_{k}(t) \frac{\ee^{(2i\alpha+1)a\sigma}\,\d\sigma}{1-\ee^{(a+i)\sigma} \ee^{i\Delta_{km}}} \right|
& \le \int_{\varphi_{0}}^{\pi} \sum_{k=0}^{M-1} |X_{k}(t)|\frac{\ee^{a\re\,\gamma_{j}(\varphi)}|\gamma_{j}'(\varphi)|\,\d\varphi}{|1-\ee^{(a+i)\gamma_{j}(\varphi)+i\Delta_{km}}|} \\
& \le \int_{\varphi_{0}}^{\pi} \sum_{k=0}^{M-1} |X_{k}(t)|c^{-1}d_{j}\ee^{a\re\,\gamma_{j}(\varphi)}\d \varphi,
\end{align*}
which together with \eqref{est-i-5} and \eqref{est-i6a} implies that
\begin{align}\label{eq-lim-3b}
\lim_{j\to-\infty} \int_{\Gamma_{j,2}} \sum_{k=0}^{M-1} X_{k}(t)\frac{\ee^{(2i\alpha+1)a\sigma}\,\d\sigma}{1-\ee^{(a+i)\sigma} \ee^{i\Delta_{km}}} = 0.
\end{align}
Then there is $\varphi_{4} \in (0,\pi/2)$ such that
\begin{align}\label{est-i-5ab}
\re \,\hat \gamma_{j}(\varphi) & = d_{j}(-2\alpha a\sin\varphi - a\cos\varphi + 2\sin\varphi) \le -d_{j}a/2,\quad \varphi\in[0,\varphi_{4}],
\end{align}
where $\hat \gamma_{j}$ is defined by \eqref{eq-gam-1}. Let $c_1>0$ be such that $\sin\varphi \ge c_{1} > 0$ for $\varphi\in[\varphi_{4},\varphi_{0}]$. Since $2a\alpha>1$, for any $\varphi\in[\varphi_{4},\varphi_{0}]$, we have
\begin{equation}\label{est-i-5aab}
\begin{aligned}
\re \,\hat\gamma_j (\varphi) &=  d_{j}(-2\alpha a\sin\varphi - a\cos\varphi + 2\sin\varphi) \le d_{j}(- a\cos\varphi + \sin\varphi) \\
& = d_{j}a\sin\varphi(-\cot\varphi + 1/a) = d_{j}a\sin\varphi(-\cot\varphi + \cot\varphi_{1}) \\
& \le d_{j}a\sin\varphi(-\cot\varphi_{0} + \cot\varphi_{1}) \le d_{j}a c_{1}(-\cot\varphi_{0} + \cot\varphi_{1}).
\end{aligned}
\end{equation}
By Lemma \ref{lem-c} and the equality \eqref{eq-22bb} we have
\begin{align*}
&\left|\int_{\Gamma_{j,1}} \sum_{k=0}^{M-1} X_{k}(t) \frac{\ee^{(2i\alpha+1)a\sigma}\,\d\sigma}{1-\ee^{(a+i)\sigma} \ee^{i\Delta_{km}}} \right|
= \left|\int_{\Gamma_{j,1}} \sum_{k=0}^{M-1} X_{k}(t) \frac{\ee^{(2i\alpha-1)a\sigma-2i\sigma-2i\Delta_{km}}}{1-\ee^{(a+i)\sigma+i\Delta_{km}}}\,\d\sigma\right|\\
&\qquad \le \int_{0}^{\varphi_{0}} \sum_{k=0}^{M-1} |X_{k}(t)|\frac{\ee^{\re\,\hat\gamma_{j}(\varphi)}|\hat\gamma_{j}'(\varphi)|\,\d\varphi}{|1-\ee^{(a+i) \hat\gamma_{j}(\varphi)+i\Delta_{km}}|} \le \int_{0}^{\varphi_{0}} \sum_{k=0}^{M-1} |X_{k}(t)|c^{-1}d_{j}\ee^{\re\,\hat\gamma_{j}(\varphi)},
\end{align*}
which together with \eqref{est-i-5ab} and \eqref{est-i-5aab} gives 
\begin{align}\label{eq-lim-4b}
\lim_{j\to-\infty} \int_{\Gamma_{j,1}} \sum_{k=0}^{M-1} X_{k}(t) \frac{\ee^{(2i\alpha+1)a\sigma}\,\d\sigma}{1-\ee^{(a+i)\sigma} \ee^{i\Delta_{km}}} = 0.
\end{align}
Combining \eqref{eq-lim-3b} and \eqref{eq-lim-4b} gives \eqref{eq-lim-p2_repeat}, as required.\\

As for showing \eqref{eq-lim-p3}, i.e. that
\begin{align}\label{eq-lim-p3_repeat}
\lim_{j\to\infty}\int_{\Gamma_{j}} \sum_{k=0}^{M-1} Y_{k}(t) \frac{\ee^{(-2i\alpha+1)a\sigma}}{1-\ee^{(a+i)\sigma} \ee^{i\Delta_{km}}}\,\d\sigma = 0,
\end{align}
we first set
\begin{align*}
\tilde\gamma_{j}(\varphi) & \coloneqq  d_j (-2i\alpha+1) \ee^{i\varphi} = d_j (-2i\alpha+1) (\cos\varphi+i\sin\varphi) \\
& = d_{j}(2\alpha\sin\varphi + \cos\varphi-2i\alpha\cos\varphi + i\sin\varphi).
\end{align*}
Let us take $\ve\in(0,\alpha)$ and choose $3\pi/2 <\varphi_{0}<2\pi$ such that
\begin{align*}
\cot\varphi_{0} = -2\alpha + \ve.
\end{align*}
Let $\varphi_{1}\in(\pi,3\pi/2)$ such that
\begin{align}\label{eq-pp1}
\re \, \tilde\gamma_j (\varphi ) = d_j(2\alpha\sin\varphi + \cos\varphi) \le -d_{j}/2,\quad \varphi\in[\pi,\varphi_{1}].
\end{align}
We observe that, for $\varphi\in[\varphi_{1},\varphi_{0}]$ we have $-\sin\varphi\ge c_{1}>0$ and
\begin{equation}
\begin{aligned}\label{eq-pp2}
\re \,\tilde\gamma_j (\varphi ) & = d_j(2\alpha\sin\varphi + \cos\varphi) = d_j\sin\varphi(2\alpha + \cot\varphi) \\
&\le d_j\sin\varphi(2\alpha + \cot\varphi_{0}) = d_j \ve\sin\varphi \le -d_j c_{1} \ve.
\end{aligned}
\end{equation}
Let us recall the contour decomposition $\Gamma_{j} = \Gamma_{j,1}\cup\Gamma_{j,2}$, where the components are given by \eqref{dec-1}.
Then, using Lemma \ref{lem-c}, we obtain the estimates
\[\begin{split}
\left|\int_{\Gamma_{j,1}} \frac{\ee^{(-2i\alpha+1)a\sigma}\,\d\sigma}{1-\ee^{(a+i)\sigma+i\Delta_k}} \right| & \le  \int_{\pi}^{\varphi_{0}} \frac{\ee^{a\re\, \tilde\gamma_{j}(\varphi)}| \tilde\gamma_{j}'(\varphi)|\,\d\varphi}{|1-\ee^{(a+i) \tilde\gamma_{j}(\varphi)+i\Delta_k}|}\le  c^{-1}d_j \int_{\pi}^{\varphi_{0}}\ee^{a\re\, \tilde\gamma_{j}(\varphi)}\,\d\varphi,
\end{split}
\]
which together with \eqref{eq-pp1} and \eqref{eq-pp2} gives the limit
\begin{align}\label{eq-lim-4ab}
\lim_{j\to-\infty} \int_{\Gamma_{j,1}} \sum_{k=0}^{M-1} Y_{k}(t) \frac{\ee^{(-2i\alpha+1)a\sigma}\,\d\sigma}{1-\ee^{(a+i)\sigma} \ee^{i\Delta_{km}}} = 0.
\end{align}
From \eqref{eq-row} it follows that
\begin{align*}
\frac{\ee^{(-2i\alpha+1)a\sigma}}{1-\ee^{(a+i)\sigma+i\Delta_{km}}} = -\ee^{-2ia\alpha\sigma-i\sigma-i\Delta_{km}} -\ee^{-2ia\alpha\sigma-a\sigma-2i\sigma-2i\Delta_{km}} + \frac{\ee^{(-2i\alpha-1)a\sigma-2i\sigma-2i\Delta_{km}}}{1-\ee^{(a+i)\sigma+i\Delta_{km}}},
\end{align*}
which together with \eqref{cond-2} gives
\begin{align}\label{eq-ab-11}
\int_{\Gamma_{j,2}} \sum_{k=0}^{M-1} Y_{k}(t) \frac{\ee^{(-2i\alpha+1)a\sigma}\,\d\sigma}{1-\ee^{(a+i)\sigma} \ee^{i\Delta_{km}}} = \int_{\Gamma_{j,2}} \sum_{k=0}^{M-1} Y_{k}(t) \frac{\ee^{(-2i\alpha-1)a\sigma-2i\sigma-2i\Delta_{km}}}{1-\ee^{(a+i)\sigma+i\Delta_{km}}}\,\d\sigma.
\end{align}
Let us define
\begin{align*}
\bar\gamma_{j}(\varphi)& \coloneqq  d_j (-2i\alpha-1)a\ee^{i\varphi} - 2id_{j}\ee^{i\varphi} = d_j (-2i\alpha-1) a(\cos\varphi+i\sin\varphi) - 2id_{j}(\cos\varphi+i\sin\varphi)\\
& = d_{j}(2\alpha a\sin\varphi - a\cos\varphi-2ia\alpha\cos\varphi - ia\sin\varphi) - 2id_{j}\cos\varphi+2d_{j}\sin\varphi.
\end{align*}
Let $\varphi_{2}\in(\varphi_{0},2\pi)$ be such that
\begin{align}\label{eq-pp3}
\re\, \bar\gamma_{j}(\varphi) & = d_{j}(2\alpha a\sin\varphi - a\cos\varphi + 2\sin\varphi) \le -d_{j}a/2,\quad \varphi\in[\varphi_{2},2\pi].
\end{align}
We observe that, for $\varphi\in[\varphi_{0},\varphi_{2}]$ we have $-\sin\varphi\ge c_{1}>0$ and
\begin{equation}
\begin{aligned}\label{eq-pp4}
\re\,\bar\gamma_{j}(\varphi) & = d_{j}(2\alpha a\sin\varphi - a\cos\varphi + 2\sin\varphi) = d_{j}a\sin\varphi(2\alpha  - \cot\varphi + 2/a) \\
& \le d_{j}a\sin\varphi(4\alpha - \ve + 2/a) \le d_{j}a\sin\varphi(3\alpha + 2/a) \le -d_{j}ac_{1}(3\alpha + 2/a).
\end{aligned}
\end{equation}
By Lemma \ref{lem-c} and the equality \eqref{eq-ab-11} we have the estimates
\begin{align*}
&\left|\int_{\Gamma_{j,2}} \sum_{k=0}^{M-1} Y_{k}(t)\frac{\ee^{(-2i\alpha+1)a\sigma}\,\d\sigma}{1-\ee^{(a+i)\sigma} \ee^{i\Delta_{km}}}\right| = \left|\int_{\Gamma_{j,2}} \sum_{k=0}^{M-1} Y_{k}(t)\frac{\ee^{(-2i\alpha-1)a\sigma-2i\sigma-2i\Delta_{km}}}{1-\ee^{(a+i)\sigma+i\Delta_{km}}}\,\d\sigma\right|\\
&\qquad \le \int_{\varphi_{0}}^{2\pi} \sum_{k=0}^{M-1} |Y_{k}(t)|\frac{\ee^{\re\,\bar\gamma_{j}(\varphi)}|\bar\gamma_{j}'(\varphi)|\,\d\varphi}{|1-\ee^{(a+i)\bar\gamma_{j}(\varphi)+i\Delta_{km}}|} \le \int_{\varphi_{0}}^{2\pi} \sum_{k=0}^{M-1} |Y_{k}(t)|c^{-1}d_{j}\ee^{\re\,\bar\gamma_{j}(\varphi)},
\end{align*}
which together with \eqref{eq-pp3} and \eqref{eq-pp4} gives the limit
\begin{align}\label{eq-lim-5ab}
\lim_{j\to-\infty} \int_{\Gamma_{j,2}} \sum_{k=0}^{M-1} Y_{k}(t) \frac{\ee^{(-2i\alpha+1)a\sigma}\,\d\sigma}{1-\ee^{(a+i)\sigma} \ee^{i\Delta_{km}}} = 0.
\end{align}
Combining \eqref{eq-lim-4ab} and \eqref{eq-lim-5ab} gives \eqref{eq-lim-p3_repeat}, as required.

\subsubsection{The Residue Theorem}\label{sec_res_thm}
As mentioned below \eqref{eq-lim-p3}, here we prove \eqref{cmk_relation1}--\eqref{cmk_relation2}, assuming the perturbation compatibility conditions \eqref{cond-1}--\eqref{cond-2}. To this end, let us recall that
\begin{equation}\label{eq-b-p}
B_{+} =-a(2i\alpha+1)\frac{1+ai}{1+a^{2}} = -\frac{a}{1+a^{2}}(1-2a\alpha + i(a+2\alpha)),
\end{equation}
and
\begin{equation}\label{eq-b-p2}
B_{-}=-a(-2i\alpha+1)\frac{1+ai}{1+a^{2}} = -\frac{a}{1+a^{2}}(1+2a\alpha + i(a-2\alpha)).
\end{equation}
As for \eqref{cmk_relation1}, case $\alpha <1/2a$, the Residue Theorem applied to the bottom half-disk (recall Fig.~\ref{fig_contours}) gives that
\begin{align*}
\mathrm{p.v.}\int_{-d_{j}}^{d_{j}}\frac{\ee^{(2i\alpha+1)a\sigma}}{1-\ee^{(a+i)\sigma} \ee^{i\Delta_{km}}}\,\,\d\sigma = \int_{\Gamma_{j}}\frac{\ee^{(2i\alpha+1)a\sigma}}{1-\ee^{(a+i)\sigma} \ee^{i\Delta_{km}}}\,\d\sigma + \frac{2\pi i}{a+i}\mathcal{R}_{mk}^{j},\quad j\ge 1.
\end{align*}
where we set
\[
\mathcal{R}_{mk}^{j}\coloneqq \begin{cases} \sum_{l=0}^{j}\ee^{a(2i\alpha+1)\sigma_{l}}, \qquad &  \Delta_{km} > 0, \\[3pt]
 \frac{1}{2}+\sum_{l=1}^{j}\ee^{a(2i\alpha+1)\sigma_{l}}, &  \Delta_{km} = 0,\\[3pt]
\sum_{l=1}^{j}\ee^{a(2i\alpha+1)\sigma_{l}}, & \Delta_{km} < 0.
\end{cases}
\]
Applying the above equality for each $k=0,\ldots , M-1$, multiplying by $X_k$ and summing in $k$, as well as using the vanishing \eqref{eq-lim-p1} of the first term on the right hand side as $j\to \infty$, we obtain
\begin{equation}
\begin{aligned}\label{eq-rr-1}
& \mathrm{p.v.}\int_{-\infty}^{\infty} \sum_{k=0}^{M-1} X_{k}(t) \frac{\ee^{(2i\alpha+1)a\sigma}}{1-\ee^{(a+i)\sigma} \ee^{i\Delta_{km}}}\,\d\sigma  = \lim_{j\to\infty}2\pi i\sum_{k=0}^{M-1} X_{k}(t) \frac{\mathcal{R}_{mk}^{j}}{a+i}.
\end{aligned}
\end{equation}
Since $\alpha <1/2a$, from \eqref{eq-b-p} we have $\mathrm{Re}\,B_{+}<0$ and hence, for each $k,m\in \{ 0 ,\ldots , M-1 \}$
\begin{equation}\label{eq-rr-2}
\lim_{j\to\infty}\mathcal{R}_{mk}^{j} = \sum_{l\ge 0}\ee^{(2\pi l + \Delta_{km}) B_{+}}
=\frac{\ee^{\Delta_{km}B_{+}}}{1-\ee^{2\pi B_{+}}} = -\ee^{\Delta_{km}B_{+}} \frac{\ee^{-\pi B_{+}}}{2\sinh(\pi B_{+})}
 = -\frac{\mathcal{B}^{+}_{mk}}{2\sinh(\pi B_{+})},
\end{equation}
if $\Delta_{km} \in (0,2\pi)$, and similarly
\begin{equation}\label{eq-rr-3}
\lim_{j\to\infty}\mathcal{R}_{mk}^{j}
= \sum_{l\ge 1}\ee^{(2\pi l + \Delta_{km}) B_{+}}
= \frac{\ee^{\Delta_{km}B_{+}}\ee^{2\pi B_{+}}}{1-\ee^{2\pi B_{+}}} = -\ee^{\Delta_{km}B_{+}} \frac{\ee^{\pi B_{+}}}{2\sinh(\pi B_{+})}
= -\frac{\mathcal{B}^{+}_{mk}}{2\sinh(\pi B_{+})},
\end{equation}
if $\Delta_{km} \in (-2\pi,0)$, which also covers the case $\Delta_{km}=0$ after adding $1/2$. (Recall \eqref{def_of_A_Bpm} for the definition of $B_\pm$.) Combining \eqref{eq-rr-1}, \eqref{eq-rr-2}, \eqref{eq-rr-3} gives
\begin{align*}
\mathrm{p.v.}\int_{-\infty}^{\infty}  \sum_{k=0}^{M-1} X_{k}(t)  \frac{\ee^{( 2i\alpha+1)a\sigma}}{1-\ee^{(a+i)\sigma} \ee^{i\Delta_{km}}}\,\d\sigma =  -\frac{1}{a+i}\frac{\pi i}{\sinh(\pi B_{+})} \sum_{k=0}^{M-1} X_{k}(t) \mathcal{B}^{+}_{mk},
\end{align*}
which by \eqref{def_of_cmk} proves \eqref{cmk_relation1}, as required.

In order to verify \eqref{cmk_relation1} in the case $\alpha >1/2a$, we apply the Residue Theorem to the upper half-disk to obtain
\begin{align}\label{eq-v-1}
\frac{1}{2\pi i}\mathrm{p.v.}\int_{-d_{j}}^{d_{j}}\frac{\ee^{(2i\alpha+1)a\sigma}}{1-\ee^{(a+i)\sigma} \ee^{i\Delta_{km}}}\,\d\sigma = - \frac{1}{2\pi i}\int_{\Gamma_{j}}\frac{\ee^{(2i\alpha+1)a\sigma}}{1-\ee^{(a+i)\sigma} \ee^{i\Delta_{km}}}\,\d\sigma - \frac{\mathcal{S}_{mk}^{j}}{a+i},\quad j\le -1,
\end{align}
where we set
\[
\mathcal{S}_{mk}^{j}\coloneqq \begin{cases} \sum_{l=-j}^{-1}\ee^{a(2i\alpha+1)\sigma_{l}},\qquad & \Delta_{km} > 0, \\[3pt]
\frac{1}{2}+\sum_{l=-j}^{-1}\ee^{a(2i\alpha+1)\sigma_{l}}, & \Delta_{km} = 0, \\[3pt]
  \sum_{l=-j}^{0}\ee^{a(2i\alpha+1)\sigma_{l}},  & \Delta_{km} < 0.
  \end{cases}
  \]
Note that in the case $\alpha >1/2a$, the inequality \eqref{eq-b-p} gives $\mathrm{Re}\,B_{+}>0$ and hence
\begin{align}\label{eq-mm-2}
\lim_{j\to-\infty}\mathcal{S}_{mk}^{j} &= \sum_{l\le -1}\ee^{(2\pi l + \Delta_{km}) B_{+}}
= \sum_{l\ge 1}\ee^{(-2\pi l + \Delta_{km}) B_{+}} = \frac{\ee^{\Delta_{km}B_{+}}\ee^{-2\pi B_{+}}}{1-\ee^{-2\pi B_{+}}} = \frac{\ee^{\Delta_{km}B_{+}} \ee^{-\pi B_{+}}}{2\sinh(\pi B_{+})},
\end{align}
if $\Delta_{km} \in (0,2\pi)$ (which also covers the case $\Delta_{km}=0$, as above), and
\begin{align}\label{eq-mm-3}
\lim_{j\to-\infty}\mathcal{S}_{mk}^{j} = \sum_{j\le 0}\ee^{(2\pi j + \Delta_{km}) B_{+}}
=\sum_{j\ge 0}\ee^{(-2\pi j + \Delta_{km}) B_{+}} = \frac{\ee^{\Delta_{km}B_{+}}}{1-\ee^{-2\pi B_{+}}} = \frac{\ee^{\Delta_{km}B_{+}}\ee^{\pi B_{+}}}{2\sinh(\pi B_{+})},
\end{align}
if $\Delta_{km} \in (-2\pi,0)$. Summing \eqref{eq-v-1} in $k$ after multiplication by $X_{k}(t)$ we can apply \eqref{eq-mm-2}, \eqref{eq-mm-3} together with the vanishing \eqref{eq-lim-p2} of the contour integral over $\Gamma_j$, to obtain
\begin{align*}
\mathrm{p.v.}\int_{-\infty}^{\infty} \sum_{k=0}^{M-1} X_{k}(t) \frac{\ee^{(2i\alpha+1)a\sigma}}{1-\ee^{(a+i)\sigma} \ee^{i\Delta_{km}}}\,\d\sigma
 = -\frac{1}{a+i}\frac{\pi i}{\sinh(\pi B_{+})}\sum_{k=0}^{M-1} X_{k}(t)\mathcal{B}^{+}_{mk}
\end{align*}
which again by \eqref{def_of_cmk} gives \eqref{cmk_relation1}, as required.

Finally, in order to show \eqref{cmk_relation2} we observe that, by \eqref{eq-b-p2}, we have $\mathrm{Re}\,B_{-}(\alpha)<0$ for every $\alpha >0$. Hence applying the Residue Theorem to the bottom half-disk, gives
\begin{align}\label{case3}
\mathrm{p.v.}\int_{-d_{j}}^{d_{j}}\frac{\ee^{(-2i\alpha+1)a\sigma}}{1-\ee^{(a+i)\sigma} \ee^{i\Delta_{km}}}\,\d\sigma = \int_{\Gamma_{j}}\frac{\ee^{(-2i\alpha+1)a\sigma}}{1-\ee^{(a+i)\sigma} \ee^{i\Delta_{km}}}\,\d\sigma + \frac{2\pi i}{a+i}\mathcal{T}_{mk}^{j},\quad j\ge 1,
\end{align}
where we set
\[
\mathcal{T}_{mk}^{j}\coloneqq \begin{cases}\sum_{l=0}^{j}\ee^{a(-2i\alpha+1)\sigma_{l}} \qquad & \Delta_{km} > 0,\\[3pt]
 \frac{1}{2}+\sum_{l=1}^{j}\ee^{a(-2i\alpha+1)\sigma_{l}} & \Delta_{km} = 0,\\[3pt]
 \sum_{l=1}^{j}\ee^{a(-2i\alpha+1)\sigma_{l}} & \Delta_{km} < 0.
 \end{cases}
 \]
Note that
\begin{align*}
\lim_{j\to\infty}\mathcal{T}_{mk}^{j} = \sum_{j\ge 0}\ee^{(2\pi j + \Delta_{km}) B_{-}}
 = \frac{\ee^{\Delta_{km}B_{-}}}{1-\ee^{2\pi B_{-}}} = -\ee^{\Delta_{km}B_{-}} \frac{\ee^{-\pi B_{-}}}{2\sinh(\pi B_{-})}
\end{align*}
if $\Delta_{km} \in (0,2\pi)$ and
\begin{align*}
\lim_{j\to\infty}\mathcal{T}_{mk}^{j} = \sum_{j\ge 1}\ee^{(2\pi j + \Delta_{km}) B_{-}}
= \frac{\ee^{\Delta_{km}B_{-}}\ee^{2\pi B_{-}}}{1-\ee^{2\pi B_{-}}} = -\ee^{\Delta_{km}B_{-}} \frac{\ee^{\pi B_{-}}}{2\sinh(\pi B_{-})}
\end{align*}
 if $\Delta_{km} \in (-2\pi,0)$ (which, as above, also covers the case $\Delta_{km}=0$). Then we can multiply \eqref{case3} by $Y_k$, sum in $k=0,\ldots , M-1$, take the limit and use the vanishing \eqref{eq-lim-p3} of the contour integral over $\Gamma_{j}$ to obtain
\begin{align*}
\mathrm{p.v.}\int_{-\infty}^{\infty} \sum_{k=0}^{M-1} Y_{k}(t) \frac{\ee^{(-2i\alpha+1)a\sigma}}{1-\ee^{(a+i)\sigma} \ee^{i\Delta_{km}}}\,\d\sigma  = -\frac{1}{a+i}\frac{\pi i}{\sinh(\pi B_{-})}\sum_{k=0}^{M-1} Y_{k}(t) \mathcal{B}^{-}_{mk}.
\end{align*}
This together with \eqref{def_of_cmk} shows \eqref{cmk_relation2} as required.

\section{Summing the coefficients}\label{sec_summing}
As mentioned in the introduction, here we show \eqref{c_pm_sums}, namely that $c^{\pm} = g\sum_{k=0}^{M-1} c^{\pm}_{mk}$, where $c_{mk}^\pm$'s are defined in \eqref{def_of_cmk} and $c^\pm$ are defined in \eqref{def_cs}, that is
\begin{equation*}
\begin{aligned}
c^{+}(\alpha) & = \frac{g a^{2}(2i\alpha + 1)}{(a+i)^{2}}\coth(\pi B_{+}(\alpha) /M), && \quad \alpha>0, \ \alpha\neq1/2a,\\
c^{-}(\alpha) &  = \frac{ga^{2}(-2i\alpha + 1)}{(a+i)^{2}}\coth(\pi B_{-}(\alpha) /M), && \quad \alpha>0.
\end{aligned}
\end{equation*}

To this end, we first note that
\eqnb\label{obs_sum_Bmk}
\sum_{k=0}^{M-1} \mathcal{B}^{\pm}_{mk}(\alpha) = \sinh (\pi B_{\pm})\coth(\pi B_{\pm} /M),
\eqne
for $m=0,\ldots,M-1$ (recall \eqref{def_Bmk} for the definition of $\mathcal{B}_{mk}^\pm$).

Indeed, setting $B\coloneqq B_{\pm}$ we see that
\begin{align*}
 \sum_{k=0}^{M-1} \mathcal{B}^{\pm}_{mk} & = \sum_{k=0}^{M-1}\ee^{B(\theta_{k}-\theta_{m})}
\left\{\begin{aligned}
&\ee^{-\pi B}, && \text{if} \ \theta_{k}>\theta_{m},\\
&\cosh(\pi B), && \text{if} \ \theta_{k}=\theta_{m},\\
&\ee^{\pi B}, && \text{if} \ \theta_{k}<\theta_{m},
\end{aligned}\right. \\
&=\sum_{k=0}^{m-1} \ee^{2\pi B (k-m)/M}\ee^{\pi B} + \cosh(\pi B) + \sum_{k=m+1}^{M-1} \ee^{2\pi B(k-m)/M}\ee^{-\pi B} \\
&=\ee^{-2\pi B m/M}\sum_{k=0}^{m-1} \ee^{2\pi B k/M}\ee^{\pi B} + \cosh(\pi B) + \sum_{k=1}^{M-m-1} \ee^{2\pi B k/M}\ee^{-\pi B} \\
&=\ee^{-2\pi B m/M}\frac{1-\ee^{2\pi B m/M}}{1-\ee^{2\pi B /M}}\ee^{\pi B} + \cosh(\pi B ) + \left(\frac{1-\ee^{2\pi B (M-m)/M}}{1-\ee^{2\pi B/M}}-1\right)\ee^{-\pi B} \\
&=\frac{\ee^{-2\pi B m/M}-1}{1-\ee^{2\pi B/M}}\ee^{\pi B} + \cosh(\pi B ) + \left(\frac{1-\ee^{2\pi B (M-m)/M}}{1-\ee^{2\pi B/M}}-1\right)\ee^{-\pi B} \\
&=-\frac{\ee^{\pi B}}{1-\ee^{2\pi B /M}} + \frac{\ee^{\pi B}+\ee^{-\pi B}}{2} + \frac{\ee^{-\pi B}}{1-\ee^{2\pi B /M}}-\ee^{-\pi B} \\
&=-\frac{\ee^{\pi B}-\ee^{-\pi B}}{1-\ee^{2\pi B /M}} + \frac{\ee^{\pi B}-\ee^{-\pi B}}{2} = \sinh (\pi B) \frac{1+\ee^{2\pi B /M}}{\ee^{2\pi B /M}-1} \\
&=\sinh (\pi B) \coth(\pi B/M),
\end{align*}
which is \eqref{obs_sum_Bmk}, as required.

Applying \eqref{obs_sum_Bmk} in the definition \eqref{def_of_cmk} of the $c_{mk}^\pm$'s immediately gives \eqref{c_pm_sums}.

\section{Finding eigenvalues of the ODE matrix}\label{sec_imag_part}

Here we prove the last claim of Theorem~\ref{thm_main}. Namely we let $M\geq 3$, and we  show that $c_0$ does not lie on the line passing through $c^+$, $c^-$ for all sufficiently large $a>0$ and $\alpha \in (0,\infty ) \setminus \{ 1/2a \}$ sufficiently close to $1/2a$.

To this end we will prove the same claim but with coefficients $c_0$, $c^+$, $c^-$ replaced by their multiples by $(a+i)^2(a^2g)^{-1}$, namely we set
\[
\begin{split}
b_0 &\coloneqq c_0 \frac{(a+i)^2 }{a^2 g} = (1-ia^{-1}) \coth (-2a\pi i/M(a+i) ),\\
b^+ (\alpha )&\coloneqq c^+ \frac{(a+i)^2 }{a^2 g} = (1+2\alpha i )  \coth (\pi B_+ /M) ,\\
b^- (\alpha )&\coloneqq c^- \frac{(a+i)^2 }{a^2 g} = (1-2\alpha i )  \coth (\pi B_- /M)
\end{split}
\]
(recall definitions \eqref{def_cs}--\eqref{def_of_A_Bpm}). Taking the limit $\alpha \to 1/2a$ we obtain
\[
\begin{split}
b^+ & = (1+ia^{-1} )  \coth (-i\pi  /M)  ,\\
b^- & = (1-ia^{-1} )  \coth \left( \left(-\frac{2a}{a^2+1} - i \frac{a^2-1}{a^2+1} \right) \frac{\pi}M \right)
\end{split}
\]
It is easy to see that $b_0$, $b^+$, $b^-$ (after taking the limit $\alpha \to 1/2a^{-}$) are smooth functions of $1/a$, and applying a Taylor expansion at $1/a=0^+$, one obtains as $a\to \infty$
\[
\begin{split}
b_0 &=  i \cot \frac{2\pi }M - a^{-1} \left( \frac{2\pi}{M \sin^2 \frac{2\pi}M }  -\cot\frac{2\pi}M \right) + o(a^{-1}),\\
b^+ &=  i   \cot \frac{\pi}{M} - a^{-1} \cot\frac{\pi}{M},\\
b^- &=  i \cot \frac{\pi}M -  a^{-1} \left(\frac{2\pi}{ M\,\sin^2 \frac{\pi }{M} } -\cot \frac{\pi}M \right) + o(a^{-1}) ,
\end{split}
\]
as it is depicted on Figure~\ref{fig_bs}.
\begin{center}
\includegraphics[width=0.5\textwidth]{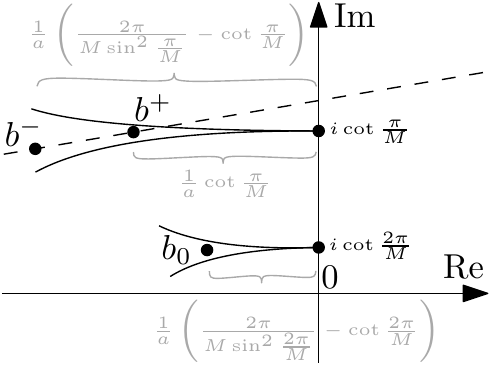}
  \captionof{figure}{A sketch of the locations of $b_0$, $b^-$, $b^+$, for sufficiently large $a>0$. Note that, in the case $M=3$ we have $\cot (2\pi/3)<0$ and so in such case the location of $b_0$ is different than suggested by the sketch.}\label{fig_bs}
  \end{center}
The claim is now clear from the figure. Indeed, the line passing through $b^+$, $b^-$ becomes horizontal in the limit $a\to \infty$, and so in particular it stays away from $b_0$. Thus $b_0$ does not lie on the line passing through $b^+$, $b^-$ for sufficiently large $a>0$. In such case we pick $\alpha$ sufficiently close to $1/2a$ so that the claim remains true by continuity.

\section*{Acknowledgements}
T.C. was partially supported by the National Science Centre grant SONATA BIS 7 number UMO-2017/26/E/ST1/00989. WSO was supported in part by the Simons Foundation.

\section{Appendix}\label{appendix}
Here we show some more properties of the notion of the finite part (recall \eqref{fp}), and we prove \eqref{eq-fp-pv} in Proposition~\ref{prop_fp_vs_pv} below.

\begin{lemma}[Finite part of a regular or p.v. integral]
If $f(t) = g(t)(t-x)^{2}$ then
\begin{align*}
\mathrm{f.p.}\int_{a}^{b} \frac{f(t)}{(t-x)^{2}}\,\d t = \int_{a}^{b} g(t)\,\d t,
\end{align*}
and if $f(t) = g(t)(t-x)$ then
\begin{align*}
\mathrm{f.p.}\int_{a}^{b} \frac{f(t)}{(t-x)^{2}}\,\d t = \mathrm{p.v.}\int_{a}^{b}\frac{g(t)}{t-x}\,\d t
\end{align*}
for every $x\in(a,b)$ and every $g\in C^{\infty}((a,b),\R)$.
\end{lemma}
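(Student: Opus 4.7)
The plan is to unwind the definition \eqref{fp} of the finite part in each of the two cases and check that the correction term $\frac{f(x+\ve)+f(x-\ve)}{\ve}$ vanishes as $\ve\to 0^+$, leaving behind either a regular integral or a principal value integral.

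For the first claim, I would substitute $f(t)=g(t)(t-x)^{2}$ into \eqref{fp}. Then the integrand $f(t)/(t-x)^{2}$ reduces to $g(t)$, which is smooth on $[a,b]$, so the two integrals over $[a,x-\ve]$ and $[x+\ve,b]$ tend to $\int_{a}^{b} g(t)\,\d t$ as $\ve\to 0^{+}$. For the correction term, observe that
\begin{equation*}
\frac{f(x+\ve)+f(x-\ve)}{\ve} = \ve\bigl(g(x+\ve)+g(x-\ve)\bigr)\to 0\quad \text{as} \ \ \ve\to 0^{+}
\end{equation*}
by continuity of $g$. Combining the two gives the claimed equality.

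For the second claim, I substitute $f(t)=g(t)(t-x)$ into \eqref{fp}. Now $f(t)/(t-x)^{2} = g(t)/(t-x)$, so the sum of the integrals over $[a,x-\ve]$ and $[x+\ve,b]$ is, by definition \eqref{pv}, precisely the principal value $\mathrm{p.v.}\int_{a}^{b} g(t)/(t-x)\,\d t$ in the limit. For the correction term we compute
\begin{equation*}
\frac{f(x+\ve)+f(x-\ve)}{\ve} = \frac{g(x+\ve)\ve - g(x-\ve)\ve}{\ve} = g(x+\ve) - g(x-\ve) \to 0 \quad \text{as} \ \ \ve\to 0^{+},
\end{equation*}
again by continuity (indeed, smoothness) of $g$. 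Passing to the limit in \eqref{fp} then yields the second identity.

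There is no real obstacle here: both claims are essentially a direct verification from \eqref{fp}, and the only subtlety is making sure that the symmetric combination $f(x+\ve)+f(x-\ve)$ kills the offending $1/\ve$ singularity. In the first case this happens because $f$ vanishes to order two at $x$, and in the second case because the two values are negatives of each other to leading order, leaving a first-order difference that vanishes by continuity of $g$.
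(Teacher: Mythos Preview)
Your proof is correct and follows essentially the same approach as the paper's own argument: both substitute the given form of $f$ directly into the definition \eqref{fp}, simplify the integrand, and check that the correction term $(f(x+\ve)+f(x-\ve))/\ve$ tends to zero by continuity of $g$.
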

\begin{proof}
For the first claim we recall the definition \eqref{fp} to get
\begin{align*}
\mathrm{f.p.}\int_{a}^{b} \frac{f(t)}{(t-x)^{2}}\,\d t & = \lim_{\ve\to 0^{+}} \left(\int_{a}^{x-\ve}\frac{g(t)(t-x)^{2}}{(t-x)^{2}}\,\d t +
\int_{x+\ve}^{b}\frac{g(t)(t-x)^{2}}{(t-x)^{2}}\,\d t-\frac{g(x+\ve)\ve^{2}+g(x-\ve)(-\ve)^{2}}{\ve}\right) \\
&=\lim_{\ve\to 0^{+}} \left(\int_{a}^{x-\ve} g(t)\,\d t +
\int_{x+\ve}^{b}g(t) \,\d t-(g(x+\ve)\ve-g(x-\ve)\ve)\right) = \int_{a}^{b}g(t)\,\d t ,
\end{align*}
and for the second claim we have
\begin{align*}
&\mathrm{f.p.}\int_{a}^{b} \frac{f(t)}{(t-x)^{2}}\,\d t  = \lim_{\ve\to 0^{+}} \left(\int_{a}^{x-\ve}\frac{g(t)(t-x)}{(t-x)^{2}}\,\d t +
\int_{x+\ve}^{b}\frac{g(t)(t-x)}{(t-x)^{2}}\,\d t-\frac{g(x+\ve)\ve+g(x-\ve)(-\ve)}{\ve}\right) \\
&\qquad = \lim_{\ve\to 0^{+}} \left(\int_{a}^{x-\ve}\frac{g(t)}{t-x}\,\d t + \int_{x+\ve}^{b}\frac{g(t)}{t-x}\,\d t-(g(x+\ve) - g(x-\ve))\right)
=\mathrm{p.v.}\int_{a}^{b}\frac{g(t)}{t-x}\,\d t.
\end{align*}
as desired.
\end{proof}
We now consider a bit more general denominator, and we characterize the finite part from Definition \ref{def_fp_curve} as the derivative of the principal value integral.
\begin{lemma}[FP vs. PV]\label{lem-diff}
For any $x\in(a,b)$, the following equality holds
\begin{align}
\frac{\d}{\d x}\left(\mathrm{p.v.}\int_{a}^{b}\frac{f(t)}{\gamma(t)-\gamma(x)}\,\d t\right) = \gamma'(x)\,\mathrm{f.p.}\int_{a}^{b} \frac{f(t)}{(\gamma(t)-\gamma(x))^{2}}\,\d t,
\end{align}
where $\gamma:[a,b]\to \C$ is a smooth curve such that $\gamma'(t)\neq 0$ for $t\in[a,b]$.
\end{lemma}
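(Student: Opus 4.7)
The plan is to differentiate the principal value directly from its definition \eqref{pv} and match the result with the finite part by means of the identity \eqref{eq-pv-2}. Set
\[
F_\ve(x) \coloneqq \int_{a}^{x-\ve}\frac{f(t)\,\d t}{\gamma(t)-\gamma(x)} + \int_{x+\ve}^{b}\frac{f(t)\,\d t}{\gamma(t)-\gamma(x)},
\]
so that $F(x)\coloneqq \mathrm{p.v.}\int_a^b \frac{f(t)\,\d t}{\gamma(t)-\gamma(x)} = \lim_{\ve\to 0^+} F_\ve(x)$. Leibniz's rule, applied to each of the two integrals and accounting for both the movable endpoints $x\pm\ve$ and the $x$-dependence of the integrand, gives
\[
F_\ve'(x) = \frac{f(x-\ve)}{\gamma(x-\ve)-\gamma(x)} - \frac{f(x+\ve)}{\gamma(x+\ve)-\gamma(x)} + \gamma'(x)\left(\int_{a}^{x-\ve} + \int_{x+\ve}^{b}\right)\frac{f(t)\,\d t}{(\gamma(t)-\gamma(x))^{2}}.
\]

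Rewriting the parenthesised pair of integrals using the finite-part formula \eqref{eq-pv-2}, we obtain
\[
F_\ve'(x) = \gamma'(x)\,\mathrm{f.p.}\!\int_{a}^{b} \frac{f(t)\,\d t}{(\gamma(t)-\gamma(x))^{2}} + R_\ve(x) + E_\ve(x),
\]
where $E_\ve(x)\to 0$ as $\ve\to 0^+$ by the very definition of the finite part as a limit, and
\[
R_\ve(x) \coloneqq \frac{f(x-\ve)}{\gamma(x-\ve)-\gamma(x)} - \frac{f(x+\ve)}{\gamma(x+\ve)-\gamma(x)} + \gamma'(x)\left(\frac{\ve\, f(x+\ve)}{(\gamma(x+\ve)-\gamma(x))^{2}} + \frac{\ve\, f(x-\ve)}{(\gamma(x-\ve)-\gamma(x))^{2}}\right).
\]

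The main technical step — and, in effect, the entire content of the lemma — is to verify that $R_\ve(x)\to 0$ as $\ve\to 0^+$. Using the Taylor expansions
\[
\gamma(x\pm\ve)-\gamma(x) = \pm\ve\gamma'(x) + \tfrac{\ve^{2}}{2}\gamma''(x) + O(\ve^{3}),\qquad f(x\pm\ve) = f(x)\pm \ve f'(x) + O(\ve^{2}),
\]
each of the four summands in $R_\ve(x)$ individually develops a leading contribution of size $\pm f(x)/(\ve\gamma'(x))$; the signs are arranged so that these $\sim 1/\ve$ singularities cancel pairwise, and the next-order $O(1)$ contributions (involving $f'(x)/\gamma'(x)$ and $f(x)\gamma''(x)/\gamma'(x)^{2}$) are odd under $\ve\mapsto -\ve$ and therefore also cancel, leaving $R_\ve(x) = O(\ve)$. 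This is routine but slightly tedious bookkeeping and is the one place where one has to grind.

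To finish, one exchanges $\ve\to 0^+$ with $\d/\d x$. The estimates above show that $R_\ve+E_\ve\to 0$ uniformly on compact subsets of $(a,b)$ (with constants depending only on local bounds for $f$, $\gamma'$, $\gamma''$ and a lower bound for $|\gamma'|$), while $F_\ve\to F$ pointwise; the standard theorem of calculus on uniform convergence of derivatives then yields $F'(x) = \lim_{\ve\to 0^+}F_\ve'(x) = \gamma'(x)\,\mathrm{f.p.}\int_a^b f(t)/(\gamma(t)-\gamma(x))^{2}\,\d t$, which is precisely the identity claimed in the lemma.
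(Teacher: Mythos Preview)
Your proposal is correct and follows essentially the same route as the paper: differentiate the truncated principal value $F_\ve$ via Leibniz's rule, compare with the finite-part expression from \eqref{eq-pv-2}, and show that the discrepancy (your $R_\ve$) vanishes as $\ve\to 0^+$ by Taylor expansion, then invoke locally uniform convergence to pass the derivative through the limit. The paper groups the four boundary terms into two pairs and shows each pair has limit $\mp\tfrac{1}{2}f(x)\gamma''(x)/\gamma'(x)^2$ (so the pair of limits cancel), whereas you handle all four at once and obtain $R_\ve=O(\ve)$; these are the same computation with slightly different bookkeeping.
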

\begin{proof}
Let us observe that
\begin{align*}
&\frac{\d}{\d x}\left(\int_{a}^{x-\ve}\frac{f(t)}{\gamma(t)-\gamma(x)}\,\d t + \int_{x+\ve}^{b}\frac{f(t)}{\gamma(t)-\gamma(x)}\,\d t\right) =
\int_{a}^{x-\ve}\frac{\gamma'(x)f(t)\,\d t}{(\gamma(t)-\gamma(x))^{2}} + \int_{x+\ve}^{b}\frac{\gamma'(x)f(t)\,\d t}{(\gamma(t)-\gamma(x))^{2}} \\
&\quad +\frac{f(x-\ve)}{\gamma(x-\ve)-\gamma(x)} - \frac{f(x+\ve)}{\gamma(x+\ve)-\gamma(x)}.
\end{align*}
Then we have the following asymptotics as $\ve\to 0$
\begin{align*}
&-\frac{\ve f(x+\ve)\gamma'(x)}{(\gamma(x+\ve)-\gamma(x))^{2}} + \frac{f(x+\ve)}{\gamma(x+\ve)-\gamma(x)}
 = -\frac{f(x+\ve)(\ve\gamma'(x) - \gamma(x+\ve) + \gamma(x))}{(\gamma(x+\ve)-\gamma(x))^{2}} \\
&\qquad  = -\frac{f(x+\ve)(\frac{1}{2}\gamma''(x)\ve^{2} + o(\ve^{2}))}{(\gamma(x+\ve)-\gamma(x))^{2}}
=-\frac{1}{2}\frac{f(x+\ve)\gamma''(x)\ve^{2}}{(\gamma(x+\ve)-\gamma(x))^{2}} + o(1)
\end{align*}
and furthermore
\begin{align*}
& -\frac{\ve f(x-\ve)\gamma'(x)}{(\gamma(x-\ve)-\gamma(x))^{2}} - \frac{f(x-\ve)}{\gamma(x-\ve)-\gamma(x)} =
\frac{f(x-\ve)(-\ve\gamma'(x) - \gamma(x-\ve) + \gamma(x))}{(\gamma(x-\ve)-\gamma(x))^{2}} \\
&\qquad  = \frac{f(x-\ve)(\frac{1}{2}\gamma''(x)\ve^{2} + o(\ve^{2}))}{(\gamma(x-\ve)-\gamma(x))^{2}}
=\frac{1}{2}\frac{f(x-\ve)\gamma''(x)\ve^{2}}{(\gamma(x-\ve)-\gamma(x))^{2}} + o(1).
\end{align*}
Since the limits in the definition of $\mathrm{p.v.}$ and $\mathrm{f.p.}$ are uniform with respect to $x$ in compact subsets of $(a,b)$, it follows that
\begin{align*}
&\frac{\d}{\d x}\left(\mathrm{p.v.}\int_{a}^{b}\frac{f(t)}{\gamma(t)-\gamma(x)}\,\d t\right) - \gamma'(x)\,\mathrm{f.p.}\int_{a}^{b} \frac{f(t)}{(\gamma(t)-\gamma(x))^{2}}\,\d t\\
&\qquad = \lim_{\ve\to 0}\left(-\frac{1}{2}\frac{f(x+\ve)\gamma''(x)\ve^{2}}{(\gamma(x+\ve)-\gamma(x))^{2}} + \frac{1}{2}\frac{f(x-\ve)\gamma''(x)\ve^{2}}{(\gamma(x-\ve)-\gamma(x))^{2}} \right) = 0,
\end{align*}
and the proof of lemma is completed. 
\end{proof}
In the following proposition we prove the integration by parts formula \eqref{eq-fp-pv}.
\begin{proposition}[FP vs. PV via integration by parts]\label{prop_fp_vs_pv}
Let us assume that $\gamma:[a,b]\to\C$ is a smooth curve satisfying $\gamma'(t)\neq 0$ for $t\in[a,b]$. Then, for any $x\in(a,b)$, we have
\begin{align}\label{eq-fp-pv_repeat}
\mathrm{f.p.}\int_{a}^{b} \frac{f(t)\,\d t}{(\gamma(t)\!-\!\gamma(x))^{2}} = \mathrm{p.v.}\int_{a}^{b}\frac{\left(f(t)/\gamma'(t)\right)'\d t}{\gamma(t)-\gamma(x)} - \frac{f(b)}{\gamma'(b)(\gamma(b)\!-\!\gamma(x))} + \frac{f(a)}{\gamma'(a)(\gamma(a)\!-\!\gamma(x))}.
\end{align}
\end{proposition}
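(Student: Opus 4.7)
My plan is to obtain \eqref{eq-fp-pv_repeat} directly from the regularized definition \eqref{eq-pv-2} of the finite part by performing integration by parts on each of the two cut-off pieces $[a,x-\varepsilon]$ and $[x+\varepsilon,b]$.

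The key preparatory observation is that, since $\gamma' \ne 0$ on $[a,b]$, we can rewrite
\[
\frac{1}{(\gamma(t)-\gamma(x))^2} = -\frac{1}{\gamma'(t)}\frac{\d}{\d t}\frac{1}{\gamma(t)-\gamma(x)}.
\]
Substituting this into the two cut-off integrals and integrating by parts (using that $f/\gamma'$ is smooth on $[a,b]$) yields, on one hand, the two integrals of $(f/\gamma')'/(\gamma(t)-\gamma(x))$ on $[a,x-\varepsilon]$ and $[x+\varepsilon,b]$, which combine in the limit $\varepsilon\to 0^+$ into the p.v.\ integral on the RHS of \eqref{eq-fp-pv_repeat} (the integrand is of the form (smooth)/$(\gamma(t)-\gamma(x))$, so the p.v.\ can be reduced to the standard one from \eqref{pv} via the smooth change of variables $t\mapsto t$ accompanied by the bounded factor $(t-x)/(\gamma(t)-\gamma(x))$). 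On the other hand, the integration by parts produces four boundary contributions: two at the fixed endpoints $a$, $b$, which are exactly the last two terms on the RHS of \eqref{eq-fp-pv_repeat}, and two at the interior cut-offs $x\pm\varepsilon$.

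The main obstacle, and the bulk of the remaining work, is to show that these two interior boundary contributions, combined with the two f.p.\ regularizing terms $-\varepsilon f(x\pm\varepsilon)/(\gamma(x\pm\varepsilon)-\gamma(x))^2$ built into \eqref{eq-pv-2}, vanish in the limit $\varepsilon\to 0^+$. Grouping the four terms pairwise, it suffices to check that
\[
\lim_{\varepsilon\to 0^+}\frac{f(x\pm\varepsilon)}{(\gamma(x\pm\varepsilon)-\gamma(x))^2}\left(\pm\frac{\gamma(x\pm\varepsilon)-\gamma(x)}{\gamma'(x\pm\varepsilon)} - \varepsilon\right) = \mp\frac{f(x)\,\gamma''(x)}{2\gamma'(x)^3},
\]
so that the two limits cancel. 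Each such limit follows from straightforward second-order Taylor expansions of $\gamma$, $\gamma'$ and $f$ at $x$: a short computation shows that each bracketed quantity expands as $\mp(\gamma''(x)/2\gamma'(x))\varepsilon^2 + O(\varepsilon^3)$ while the corresponding denominator is $\gamma'(x)^2\varepsilon^2 + O(\varepsilon^3)$, giving the stated limit. Combining the cancellation of these interior terms with the endpoint contributions and the p.v.\ limit obtained above yields exactly the identity \eqref{eq-fp-pv_repeat}.
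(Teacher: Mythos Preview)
Your proposal is correct and follows essentially the same route as the paper's proof: integrate by parts on each cut-off piece using $1/(\gamma(t)-\gamma(x))^2 = -(1/\gamma'(t))\,\frac{\d}{\d t}(1/(\gamma(t)-\gamma(x)))$, identify the endpoint contributions at $a,b$, and then show that the interior boundary terms at $x\pm\varepsilon$ cancel the f.p.\ regularizing terms in the limit. The only difference is in the bookkeeping of that last cancellation: the paper bundles the four interior terms into a single remainder $R(\varepsilon)$ and clears denominators to analyze the resulting numerator via the auxiliary quantities $K_1(\varepsilon),K_2(\varepsilon)$, whereas you compute the two $\pm$ pairs separately, obtaining the explicit finite limits $\mp f(x)\gamma''(x)/(2\gamma'(x)^3)$ which visibly cancel. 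Your organization is a bit more transparent, but the underlying argument is the same.
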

\begin{proof}
Let us define
\begin{align*}
I(\ve)\coloneqq \frac{\ve}{(\gamma(x+\ve)-\gamma(x))^{2}}\quad\text{and}\quad J(\ve)\coloneqq  \frac{1}{\gamma'(x+\ve)(\gamma(x+\ve)-\gamma(x))},\quad \ve\in\R\setminus\{0\}.
\end{align*}
Then we have
\begin{align*}
&\int_{a}^{x-\ve}\frac{f(t)\,\d t}{(\gamma(t)-\gamma(x))^{2}} = -\int_{a}^{x-\ve}\frac{f(t)}{\gamma'(t)}\left(\frac{1}{\gamma(t)-\gamma(x)}\right)'\,\d t \\
&\qquad = \int_{a}^{x-\ve} \left(\frac{f(t)}{\gamma'(t)}\right)'\frac{\d t}{\gamma(t)-\gamma(x)}
-f(x-\ve)J(-\ve) + \frac{f(a)}{\gamma'(a)(\gamma(a)-\gamma(x))}
\end{align*}
and similarly
\begin{align*}
\int_{x+\ve}^{b}\frac{f(t)\,\d t}{(\gamma(t)-\gamma(x))^{2}} 
= \int_{x+\ve}^{b} \left(\frac{f(t)}{\gamma'(t)}\right)'\frac{dt}{\gamma(t)-\gamma(x)}
+f(x+\ve)J(\ve) - \frac{f(b)}{\gamma'(b)(\gamma(b)-\gamma(x))}.
\end{align*}
Let us write
\begin{align*}
R(\ve)&\coloneqq  -f(x+\ve)I(\ve) + f(x-\ve)I(-\ve) +f(x+\ve)J(\ve) - f(x-\ve)J(-\ve),\quad \ve\in \R\setminus\{0\}.
\end{align*}
By the formula \eqref{eq-pv-2}, to prove \eqref{eq-fp-pv_repeat}, it is enough to show that $R(\ve) = o(1)$ as $\ve\to 0$. To this end we observe that
\begin{equation}
\begin{aligned}\label{eq-p-1}
R(\ve) & = f(x+\ve)(J(\ve)-I(\ve)) + f(x-\ve)(I(-\ve) - J(-\ve)) \\
& = f(x)(J(\ve)-I(\ve)) + f(x)(I(-\ve) - J(-\ve)) \\
&\quad - O(\ve)(J(\ve)-I(\ve)) +  O(\ve)(I(-\ve) - J(-\ve))\quad \text{ as }\quad \ve\to 0.
\end{aligned}
\end{equation}
Then we have
\begin{equation}
\begin{aligned}\label{eq-p-2}
J(\ve)-I(\ve) & = \frac{\gamma(x+\ve)-\gamma(x) - \ve\gamma'(x+\ve)}{\gamma'(x+\ve)(\gamma(x+\ve)-\gamma(x))^{2}}
= \frac{\gamma'(x)\ve +o(\ve) - \ve(\gamma'(x)+\gamma''(x)\ve + o(\ve))}{\gamma'(x+\ve)(\gamma(x+\ve)-\gamma(x))^{2}} \\
& = \frac{o(\ve) - \gamma''(x)\ve^{2} + \ve o(\ve)}{\gamma'(x+\ve)(\gamma(x+\ve)-\gamma(x))^{2}},\quad \ve\to 0,
\end{aligned}
\end{equation}
which after changing of variables $\ve\mapsto -\ve$ gives
\begin{equation}
\begin{aligned}\label{eq-p-3}
I(-\ve) - J(-\ve) = \frac{o(\ve) - \gamma''(x)\ve^{2} - \ve o(\ve)}{\gamma'(x-\ve)(\gamma(x-\ve)-\gamma(x))^{2}}, \quad \ve\to 0.
\end{aligned}
\end{equation}
Combining \eqref{eq-p-1}, \eqref{eq-p-2} and \eqref{eq-p-3} provides
\begin{align}\label{eq-p-4}
R(\ve) = f(x)(J(\ve)-I(\ve)) + f(x)(I(-\ve) - J(-\ve)) + o(1),\quad \ve\to 0.
\end{align}
Let us observe that
\begin{equation}
\begin{aligned}\label{eq-p-9}
& (J(\ve)-I(\ve) + I(-\ve) - J(-\ve))(\gamma(x-\ve)-\gamma(x))^{2}(\gamma(x+\ve)-\gamma(x))^{2}\gamma'(x+\ve)\gamma'(x-\ve) \\
& \quad= -(\gamma(x-\ve)-\gamma(x))(\gamma(x+\ve)-\gamma(x))^{2}\gamma'(x+\ve) \\
& \qquad +(\gamma(x-\ve)-\gamma(x))^{2}(\gamma(x+\ve)-\gamma(x))\gamma'(x-\ve) \\
& \qquad -\ve(\gamma(x-\ve)-\gamma(x))^{2}\gamma'(x+\ve)\gamma'(x-\ve) \\
& \qquad -\ve(\gamma(x+\ve)-\gamma(x))^{2}\gamma'(x+\ve)\gamma'(x-\ve) \\
& = K_{1}(\ve) - K_{1}(-\ve) + K_{2}(\ve)-K_{2}(-\ve)
\end{aligned}
\end{equation}
for $\ve\in\R\setminus\{0\}$, where we define
\begin{align*}
K_{1}(\ve) & := -(\gamma(x-\ve)-\gamma(x))(\gamma(x+\ve)-\gamma(x))^{2}\gamma'(x+\ve) \\
K_{2}(\ve) & := -\ve(\gamma(x-\ve)-\gamma(x))^{2}\gamma'(x+\ve)\gamma'(x-\ve).
\end{align*}
We show that $K_{1}(\ve) - K_{1}(-\ve) + K_{2}(\ve)-K_{2}(-\ve)=o(\ve^{4})$ as $\ve\to 0$. To this end, we observe that
\begin{align*}
K_{1}(\ve) &= - (-\ve\gamma'(x) + \gamma''(x)\ve^{2}/2 + o(\ve^{2}))(\ve\gamma'(x)+\gamma''(x)\ve^{2}/2 + o(\ve^{2}))^{2}(\gamma'(x) + \gamma''(x)\ve+o(\ve))\\
& = - (-\ve\gamma'(x) + \gamma''(x)\ve^{2}/2)(\ve\gamma'(x) + \gamma''(x)\ve^{2}/2 + o(\ve^{2}))^{2}(\gamma'(x) + \gamma''(x)\ve) + o(\ve^{4}) \\
& = - (-\ve\gamma'(x) + \gamma''(x)\ve^{2}/2)(\ve\gamma'(x) + \gamma''(x)\ve^{2}/2)^{2}(\gamma'(x) + \gamma''(x)\ve) + o(\ve^{4}) \\
& = - (-\ve^{2}\gamma'(x)^{2} + \gamma''(x)^{2}\ve^{4}/4)(\ve\gamma'(x) + \gamma''(x)\ve^{2}/2)(\gamma'(x) + \gamma''(x)\ve) + o(\ve^{4}) \\
& = \ve^{2}\gamma'(x)^{2}(\ve\gamma'(x) + \gamma''(x)\ve^{2}/2)(\gamma'(x) + \gamma''(x)\ve) + o(\ve^{4})  \\
& = \ve^{2}\gamma'(x)^{3}(\ve\gamma'(x) + \gamma''(x)\ve^{2}/2) + \ve^{4}\gamma'(x)^{2}(\gamma'(x) + \gamma''(x)\ve/2) \gamma''(x) + o(\ve^{4}),
\end{align*}
as $\ve\to 0$, and consequently
\begin{equation}
\begin{aligned}\label{eq-p-5}
& K_{1}(\ve) - K_{1}(-\ve)= \ve^{2}\gamma'(x)^{3}(\ve\gamma'(x) + \gamma''(x)\ve^{2}/2) - \ve^{2}\gamma'(x)^{3}(-\ve\gamma'(x) + \gamma''(x)\ve^{2}/2)\\
&\quad + \ve^{4}\gamma'(x)^{2}(\gamma'(x)+\gamma''(x)\ve/2)\gamma''(x)-\ve^{4}\gamma'(x)^{2}(\gamma'(x)-\gamma''(x)\ve/2) \gamma''(x) + o(\ve^{4}) \\
& = \ve^{3}\gamma'(x)^{4} + \ve^{4}\gamma'(x)^{3}\gamma''(x)/2 + \ve^{3}\gamma'(x)^{4} - \ve^{4}\gamma'(x)^{3}\gamma''(x)/2 + o(\ve^{4}) \\
& = 2\ve^{3}\gamma'(x)^{4} + o(\ve^{4})\quad \text{ as }\quad \ve\to 0.
\end{aligned}
\end{equation}
Furthermore we have
\begin{align*}
K_{2}(\ve) & = -\ve(-\gamma'(x)\ve + \gamma''(x)\ve^{2}/2 + o(\ve^{2}))^{2}(\gamma'(x) + \gamma''(x)\ve + o(\ve))(\gamma'(x) - \gamma''(x)\ve+o(\ve))\\
& = -\ve(-\gamma'(x)\ve + \gamma''(x)\ve^{2}/2 + o(\ve^{2}))^{2}(\gamma'(x) + \gamma''(x)\ve)(\gamma'(x) - \gamma''(x)\ve) + o(\ve^{4}) \\
& = -\ve(-\gamma'(x)\ve + \gamma''(x)\ve^{2}/2 + o(\ve^{2}))^{2}\gamma'(x)^{2} + o(\ve^{4}) \\
& = -\ve(-\gamma'(x)\ve + \gamma''(x)\ve^{2}/2 )^{2}\gamma'(x)^{2} + o(\ve^{4}) \\
& = -\ve^{3}\gamma'(x)^{4} +\ve^{4}\gamma'(x)^{3}\gamma''(x) + o(\ve^{4})\quad \text{ as }\quad \ve\to 0,
\end{align*}
which implies that
\begin{equation}
\begin{aligned}\label{eq-p-6}
K_{2}(\ve)-K_{2}(-\ve)& =-\ve^{3}\gamma'(x)^{4} +\ve^{4}\gamma'(x)^{3}\gamma''(x)-\ve^{3}\gamma'(x)^{4}-\ve^{4}\gamma'(x)^{3}\gamma''(x)+o(\ve^{4}) \\
& = -2\ve^{3}\gamma'(x)^{4}+ o(\ve^{4})\quad \text{ as }\quad \ve\to 0.
\end{aligned}
\end{equation}
Combining \eqref{eq-p-4}, \eqref{eq-p-9}, \eqref{eq-p-5} and \eqref{eq-p-6} we obtain
\begin{align*}
R(\ve) & 
 = \frac{K_{1}(\ve) - K_{1}(-\ve) + K_{2}(\ve)-K_{2}(-\ve)}{(\gamma(x-\ve)-\gamma(x))^{2}(\gamma(x+\ve)-\gamma(x))^{2}\gamma'(x+\ve)\gamma'(x-\ve)} + o(1) \\
& = \frac{o(\ve^{4})}{(\gamma(x-\ve)-\gamma(x))^{2}(\gamma(x+\ve)-\gamma(x))^{2}\gamma'(x+\ve)\gamma'(x-\ve)} + o(1) = o(1)
\end{align*}
as $\ve\to0$ and the proof of the proposition is completed.
\end{proof}

\end{document}